\newtheorem{theorem}{Theorem}[section]
\newtheorem{corollary}[theorem]{Corollary}
\newtheorem{lemma}[theorem]{Lemma}
\newtheorem{proposition}[theorem]{Proposition}
\theoremstyle{definition}
\newtheorem{definition}[theorem]{Definition}
\newtheorem{example}[theorem]{Example}
\theoremstyle{remark}
\newtheorem{remark}[theorem]{Remark}
\numberwithin{equation}{section}
\renewcommand{\p@enumii}{}
\newcommand{\RR}{\mathbb{R}}
\newcommand{\NN}{\mathbb{N}}
\newcommand{\Fr}{\operatorname{Fr}}
\def\<#1>{\langle #1 \rangle}
\newbox\onebox
\newcommand{\coherent}[1]{\mathbin{\setbox\onebox=\hbox{$=$}\lower0.7\ht%
\onebox\hbox{$\stackrel{#1}{=}$}}}
\newcommand{\acr}{\newline\indent}
\newcommand{\CEC}{\mathbf{CEC}}
\newcommand{\ol}[1]{\overline{#1}}
\newcommand{\mbf}[1]{\mathbf{#1}}
\newcommand{\seq}[3][\mathbb{N}]{(#2)_{#3\in #1}}
\begin{document}

\title[Completeness, Closedness and Metric Reflections]{Completeness, Closedness and Metric Reflections of Pseudometric Spaces}

\author{Viktoriia Bilet}
\address{\textbf{Viktoriia Bilet}\acr
Department of Theory of Functions \acr
Institute of Applied Mathematics and Mechanics of NASU\acr
Dobrovolskogo str. 1, Slovyansk 84100, Ukraine}
\email{viktoriiabilet@gmail.com}

\author{Oleksiy Dovgoshey}
\address{\textbf{Oleksiy Dovgoshey}\acr
Department of Theory of Functions \acr
Institute of Applied Mathematics and Mechanics of NASU \acr
Dobrovolskogo str. 1, Slovyansk 84100, Ukraine \acr
Institut fuer Mathematik Universitaet zu Luebeck\acr
Ratzeburger Allee 160, D-23562 Luebeck, Deutschland}

\email{oleksiy.dovgoshey@gmail.com}

\subjclass[2020]{Primary 54E35.}

\keywords{Completeness, pseudometric, metric reflection of pseudometric space, equivalence relation.}

\begin{abstract}
It is well-known that a metric space \((X, d)\) is complete iff the set \(X\) is closed in every metric superspace of \((X, d)\). For a given pseudometric space \((Y, \rho)\), we describe the maximal class \(\CEC(Y, \rho)\) of superspaces of \((Y, \rho)\) such that \((Y, \rho)\) is complete if and only if \(Y\) is closed in every \((Z, \Delta) \in \CEC(Y, \rho)\).

We also introduce the concept of pseudoisometric spaces and prove that spaces are pseudoisometric iff their metric reflections are isometric. The last result implies that a pseudometric space is complete if and only if this space is pseudoisometric to a complete pseudometric space.
\end{abstract}

\maketitle

\section{Introduction and Preliminaries}

The present paper is aimed to expand some well-known characterizations of complete metric spaces to pseudometric ones. Let us start from the classical notion of metric space introduced by Maurice Fr\'{e}chet in his thesis \cite{Fre1906RdCMdP}.

In what follows, we will denote by \(\NN\) the set of all positive integer numbers, and \(\RR\) the set of all real numbers.

A \textit{metric} on a set \(X\) is a function \(d\colon X^{2} \to \RR\) such that for all \(x\), \(y\), \(z \in X\):
\begin{enumerate}
\item \(d(x,y) \geqslant 0\) with equality if and only if \(x=y\), the \emph{positivity property};
\item \(d(x,y)=d(y,x)\), the \emph{symmetry property};
\item \(d(x, y)\leq d(x, z) + d(z, y)\), the \emph{triangle inequality}.
\end{enumerate}

In 1934 \DJ{}uro Kurepa \cite{Kur1934CRASP} introduced the pseudometric spaces which, unlike metric spaces, allow the zero distance between different points.

\begin{definition}\label{ch2:d2}
Let \(X\) be a set and let \(d \colon X^{2} \to \RR\) be a non-negative, symmetric function such that \(d(x, x) = 0\) for every \(x \in X\). The function \(d\) is a \emph{pseudometric} on \(X\) if it satisfies the triangle inequality.
\end{definition}

If \(d\) is a pseudometric on \(X\), we say that \((X, d)\) is a \emph{pseudometric} \emph{space}.

Let us recall some concepts connected with closedness and completeness in pseudometric spaces. The definitions below are standard and can be found, for example, in \cite{Fre2014} and \cite{Kelley1975}.

Let \((X, d)\) be a pseudometric space. An \emph{open ball} with a \emph{radius} \(r > 0\) and a \emph{center} \(c \in X\) is the set
\[
B_r(c) = \{x \in X \colon d(c, x) < r\}.
\]

Now we can introduce a topology on pseudometric spaces using the open balls.

\begin{definition}\label{d2.4}
Let \((X, d)\) be a pseudometric space, \(\mbf{B}_X = \mbf{B}_{X, d}\) be the set of all open balls in \((X, d)\) and \(\tau\) be a topology on \(X\). We will say that \(\tau\) is generated by pseudometric \(d\) if \(\mbf{B}_X\) is an open base for \(\tau\).
\end{definition}

Thus, \(\tau\) is generated by \(d\) if and only if every \(B \in \mbf{B}_X\) belongs to \(\tau\) and every nonempty \(A \in \tau\) is the union of a family of elements of \(\mbf{B}_X\).

In what follows, we assume that every pseudometric space \((X, d)\) is also a topological space endowed with the topology \(\tau\) generated by \(d\).

As in the case of the metric spaces, we define the notion of a Cauchy sequence as follows.

\begin{definition}
Let \((X, d)\) be a pseudometric space. A sequence \((x_n)_{n \in \mathbb{N}} \subseteq X\) is a \emph{Cauchy sequence} in \((X, d)\) if, for every \(r > 0\), there is an integer \(n_0 \in \NN\) such that \(x_n \in B_r(x_{n_0})\) for every \(n \geqslant n_0\).
\end{definition}

\begin{remark}\label{r2.17}
Here and later the symbol \((x_n)_{n\in \mathbb{N}} \subseteq X\) means that \(x_n \in X\) holds for every \(n \in \mathbb{N}\).
\end{remark}

A sequence \((x_n)_{n \in \mathbb{N}}\) of points in a pseudometric space \((X, d)\) is said to \emph{converge to} a point \(a \in X\) if
\begin{equation*}
\lim_{n \to \infty} d(x_n, a) = 0.
\end{equation*}
A sequence is convergent if it is \emph{convergent} to some point. It is clear that every convergent sequence is a Cauchy sequence.

\begin{definition}\label{d2.6}
A subset \(S\) of a pseudometric space \((X, d)\) is \emph{complete} if every Cauchy sequence \((x_n)_{n \in \mathbb{N}} \subseteq S\) converges to a point \(s \in S\).
\end{definition}

The purpose of the second section of the present paper is to obtain ``pseudometric'' modifications of some results known for complete metric spaces. In particular, we find generalizations of the following theorems (see Theorems~10.2.1 and 10.3.1 in \cite{Sea2007}).

\begin{theorem}\label{t1.10}
Let \((Y, \rho)\) be a metric space. The set \(Y\) is closed in every metric superspace of \((Y, \rho)\) iff \((Y, \rho)\) is complete.
\end{theorem}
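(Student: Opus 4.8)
The plan is to prove both implications of Theorem~\ref{t1.10} directly from the definitions.

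\textbf{($\Leftarrow$) Completeness implies closedness.} Suppose $(Y,\rho)$ is complete and let $(Z,\Delta)$ be any metric superspace, i.e.\ $Y\subseteq Z$ and $\Delta|_{Y\times Y}=\rho$. To show $Y$ is closed in $(Z,\Delta)$, I would take a point $z\in Z$ lying in the closure of $Y$ and produce a sequence $(y_n)_{n\in\NN}\subseteq Y$ with $\Delta(y_n,z)\to 0$ (using that in a metric space closure points are sequential limits). Such a sequence is Cauchy in $(Z,\Delta)$, hence Cauchy in $(Y,\rho)$ because $\Delta$ restricts to $\rho$ on $Y$. By completeness of $(Y,\rho)$ it converges to some $y\in Y$, so $\Delta(y_n,y)\to 0$; combined with $\Delta(y_n,z)\to 0$ and the triangle inequality, $\Delta(y,z)=0$, whence $z=y\in Y$ by the positivity property of the metric $\Delta$. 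Thus $Y$ is closed.

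\textbf{($\Rightarrow$) Closedness (in every superspace) implies completeness.} For the contrapositive, assume $(Y,\rho)$ is not complete, so there is a Cauchy sequence $(y_n)_{n\in\NN}\subseteq Y$ with no limit in $Y$. The main step is to build a metric superspace $(Z,\Delta)$ in which $Y$ fails to be closed; the natural candidate is to adjoin a single new point $\omega\notin Y$, set $Z=Y\cup\{\omega\}$, and define $\Delta|_{Y\times Y}=\rho$, $\Delta(\omega,\omega)=0$, and
\[
\Delta(\omega,y)=\Delta(y,\omega)=\lim_{n\to\infty}\rho(y_n,y)\qquad (y\in Y),
\]
the limit existing because $(\rho(y_n,y))_{n}$ is a Cauchy sequence of reals (the sequence $(y_n)$ being $\rho$-Cauchy). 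One then checks: symmetry is automatic; the triangle inequalities among points of $Y$ hold since $\rho$ is a metric, and those involving $\omega$ follow by passing to the limit in $\rho(y_n,y)\le\rho(y_n,x)+\rho(x,y)$ etc.; and positivity of $\Delta$ needs $\Delta(\omega,y)>0$ for all $y\in Y$, which holds precisely because $(y_n)$ does not converge to $y$ in $(Y,\rho)$ — this is where the failure of completeness is used. Finally $\Delta(\omega,y_n)=\lim_m\rho(y_m,y_n)\to 0$ as $n\to\infty$ (Cauchyness again), so $\omega$ lies in the closure of $Y$ but not in $Y$; hence $Y$ is not closed in this superspace. By contraposition, if $Y$ is closed in every metric superspace then $(Y,\rho)$ is complete.

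I expect the only real obstacle to be the verification that $\Delta$ defined above is genuinely a \emph{metric} (not merely a pseudometric) on $Z$ — specifically the strict positivity $\Delta(\omega,y)>0$, which must be extracted from the non-convergence of the chosen Cauchy sequence, together with a careful (but routine) limiting argument for the triangle inequalities involving $\omega$. Everything else is bookkeeping with sequences and the definitions of Cauchy sequence, convergence, and closure.
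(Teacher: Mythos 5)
Your proof is correct. The paper itself offers no proof of Theorem~\ref{t1.10}: it is imported from \cite{Sea2007}, and the nearest in-paper argument is the pseudometric generalization, Theorem~\ref{t2.13} (see Remark~\ref{r2.14}), which is proved quite differently. There, ``complete \(\Rightarrow\) closed'' is reduced via Proposition~\ref{p2.7} to the inclusion of zero-distance classes \([y]_0 \subseteq Y\), and the converse is obtained not by contraposition but by exhibiting a single complete superspace in \(\CEC(Y,\rho)\) --- built from the metric completion of the metric reflection --- and then invoking Lemma~\ref{l2.6} (closed subsets of complete spaces are complete). Your route is the classical elementary one: a sequential closure argument for one direction, and for the other an ad hoc one-point extension \(Z = Y \cup \{\omega\}\) with \(\Delta(\omega,y) = \lim_{n}\rho(y_n,y)\). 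All the verifications you flag as potential obstacles do go through: the limit exists because \((\rho(y_n,y))_{n}\) is Cauchy in \(\RR\), the triangle inequalities involving \(\omega\) follow by passing to the limit in the corresponding inequalities for \(\rho(y_n,\cdot)\), and \(\Delta(\omega,y)>0\) holds precisely because \(\Delta(\omega,y)=0\) would say \(y_n \to y\) in \((Y,\rho)\), contradicting the choice of a non-convergent Cauchy sequence. What your approach buys is self-containedness (no appeal to the existence of metric completions); what the paper's approach buys is that the same construction, phrased for pseudometrics, yields the stronger Theorem~\ref{t2.13} in one stroke.
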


Here and what follows, for pseudometric spaces \((Y, \rho)\) and \((X, d)\), we say that \((X, d)\) is a superspace of \((Y, \rho)\) if \(Y \subseteq X\) and \(d|_{Y^2} = \rho\) hold.

\begin{theorem}\label{t1.11}
Let \((X, d)\) be a complete metric space and let \(S\) be a subset of \(X\). Then \(S\) is closed iff \(S\) is complete.
\end{theorem}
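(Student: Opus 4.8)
The plan is to prove both implications directly via Cauchy sequences, exploiting the sequential characterization of closedness that is available in metric spaces. Throughout I would use two elementary facts about a metric space: that a convergent sequence is Cauchy (already noted in the excerpt), and that limits of convergent sequences are unique (an immediate consequence of the positivity and triangle properties of $d$).

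For the implication ``$S$ closed $\Rightarrow$ $S$ complete'', I would take an arbitrary Cauchy sequence $(x_n)_{n\in\NN}\subseteq S$. Regarding it as a Cauchy sequence in the ambient space $(X,d)$ and invoking the completeness of $(X,d)$, it converges to some point $x\in X$. Since every $x_n$ lies in $S$ and $S$ is closed, the limit $x$ must belong to $S$; hence $(x_n)_{n\in\NN}$ converges to a point of $S$, which is exactly what Definition~\ref{d2.6} requires for $S$ to be complete.

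For the converse, ``$S$ complete $\Rightarrow$ $S$ closed'', I would show $\overline{S}\subseteq S$. Let $x\in\overline{S}$ and pick a sequence $(x_n)_{n\in\NN}\subseteq S$ with $\lim_{n\to\infty} d(x_n, x)=0$. Being convergent in $(X,d)$, this sequence is Cauchy, and it is a Cauchy sequence in $S$ as well; by completeness of $S$ it converges to some $s\in S$. Uniqueness of limits in the metric space $(X,d)$ then forces $s=x$, so $x\in S$. Therefore $\overline{S}=S$ and $S$ is closed.

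I do not expect a genuine obstacle here: the argument is the classical one. The only point that deserves care is that we are working with a \emph{metric} (not merely pseudometric) ambient space, so that limits are unique and closedness can be tested by sequences; this is precisely what makes both directions clean. (In the pseudometric setting these facts can fail, which is the motivation for the more delicate results elsewhere in the paper.)
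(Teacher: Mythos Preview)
Your argument is correct and is exactly the classical proof one finds in standard texts: use completeness of the ambient space to get a limit, then either closedness of $S$ (for one direction) or uniqueness of limits in a genuine metric space (for the other) to place that limit in $S$.

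The paper, however, does not supply a direct proof of Theorem~\ref{t1.11}; it is quoted as a known result from \cite{Sea2007}. The closest the paper comes to a proof is Remark~\ref{r2.11}, which recovers Theorem~\ref{t1.11} as the metric special case of the pseudometric Theorem~\ref{t2.9}: in a metric space $[x]_0=\{x\}$, so statement~\ref{t2.9:s2} of Theorem~\ref{t2.9} reduces to $\Fr(A)\subseteq A$, i.e., to closedness of $A$. Thus your route is the elementary, self-contained one, while the paper's (implicit) route is to specialize a more general result whose proof in turn rests on Proposition~\ref{p2.7} and Corollary~\ref{c2.8}. Your approach is shorter and requires no pseudometric machinery; the paper's approach has the advantage of exhibiting Theorem~\ref{t1.11} as the degenerate case of the pseudometric characterization that is the paper's actual subject.
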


For every pseudometric space \((Y, \rho)\) we define the class \(\CEC(Y, \rho)\) (\textbf{C}ompleteness \textbf{E}quivalent \textbf{C}losedness) of pseudometric spaces as follows.

\begin{definition}\label{d2.12}
Let \((Y, \rho)\) be a pseudometric space. A pseudometric space \((X, d)\) belongs to \(\CEC(Y, \rho)\) if \((X, d)\) is a superspace of \((Y, \rho)\) and the inequality
\[
d(x, y) > 0
\]
holds whenever \(y \in Y\) and \(x \in X \setminus Y\).
\end{definition}

Our next goal is to remember the concept of \emph{metric reflection}.

A \emph{binary relation} on a set (or, more generally, on a class) \(X\) is a subset (a subclass) of the Cartesian square
\[
X^2 = X \times X = \{\<x, y>\colon x, y \in X\},
\]
where
\[
\<x, y> = \{\{x\}, \{x, y\}\}
\]
is an ordered (by Kazimierz Kuratowski) pair. A binary relation \(R \subseteq X^{2}\) is an \emph{equivalence relation} on \(X\) if the following conditions hold for all \(x\), \(y\), \(z \in X\):
\begin{enumerate}
\item \(\<x, x> \in R\), the \emph{reflexivity} property;
\item \((\<x, y> \in R) \Leftrightarrow (\<y, x> \in R)\), the \emph{symmetry} property;
\item \(((\<x, y> \in R) \text{ and } (\<y, z> \in R)) \Rightarrow (\<x, z> \in R)\), the \emph{transitivity} property.
\end{enumerate}

If \(R\) is an equivalence relation on a set \(X\), then an \emph{equivalence class} is a subset \([a]_R\) of \(X\) having the form
\begin{equation}\label{e1.1}
[a]_R = \{x \in X \colon \<x, a> \in R\}, \quad a \in X.
\end{equation}
The \emph{quotient set} of \(X\) with respect to \(R\) is the set of all equivalence classes \([a]_R\), \(a \in X\).

There exists the well-known, one-to-one correspondence between the equivalence relations on the sets and partitions of the sets.

Let \(X\) be a nonempty set and \(P = \{X_j \colon j \in J\}\) be a set of nonempty subsets of \(X\). Then \(P\) is a \emph{partition} of \(X\) with the \emph{blocks} \(X_j\) if
\[
\bigcup_{j \in J} X_j = X,
\]
and \(X_{j_1} \cap X_{j_2} = \varnothing\) holds for all distinct \(j_1\), \(j_2 \in J\).

\begin{proposition}\label{p2.2}
Let \(X\) be a nonempty set. If \(P = \{X_j \colon j \in J\}\) is a partition of \(X\) and \(R_P\) is a binary relation on \(X\) defined as
\begin{itemize}
\item[] \(\<x, y> \in R_P\) if and only if \(\exists j \in J\) such that  \(x \in X_j\) and \(y \in X_j\),
\end{itemize}
then \(R_P\) is an equivalence relation on \(X\) with the equivalence classes \(X_j\). Conversely, if \(R\) is an equivalence relation on \(X\), then the quotient set of \(X\) with respect to \(R\) is a partition of \(X\) with the blocks \([a]_R\).
\end{proposition}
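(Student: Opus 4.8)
The plan is to verify the two halves separately, both by direct checking of the defining conditions. For the first half, I would start from a partition $P = \{X_j \colon j \in J\}$ and check that $R_P$ satisfies reflexivity, symmetry, and transitivity. Reflexivity follows because the blocks cover $X$: given $x \in X$ there is $j \in J$ with $x \in X_j$, hence $\langle x, x\rangle \in R_P$. Symmetry is immediate from the symmetric form of the definition of $R_P$. For transitivity, suppose $\langle x, y\rangle \in R_P$ and $\langle y, z\rangle \in R_P$, witnessed by $x, y \in X_{j_1}$ and $y, z \in X_{j_2}$; then $y \in X_{j_1} \cap X_{j_2}$, so this intersection is nonempty, and the disjointness of distinct blocks of a partition forces $j_1 = j_2$, whence $x, z \in X_{j_1}$ and $\langle x, z\rangle \in R_P$.

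Next I would identify the equivalence classes. Fix $j \in J$ and $a \in X_j$; I claim $[a]_{R_P} = X_j$. If $x \in [a]_{R_P}$, then $x, a \in X_k$ for some $k \in J$, and since $a \in X_j \cap X_k$ we again obtain $j = k$ by disjointness, so $x \in X_j$. Conversely, if $x \in X_j$, then $x, a \in X_j$ gives $\langle x, a\rangle \in R_P$, i.e. $x \in [a]_{R_P}$. Since every block is nonempty, each block arises in this way as an equivalence class, and conversely every class $[a]_{R_P}$ equals the (unique) block containing $a$.

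For the converse, I would start from an equivalence relation $R$ on $X$ and show that the quotient set $\{[a]_R \colon a \in X\}$ is a partition of $X$. Each $[a]_R$ is nonempty because $a \in [a]_R$ by reflexivity, and the same fact shows $\bigcup_{a \in X} [a]_R = X$. It remains to see that two equivalence classes are either equal or disjoint, equivalently that distinct classes are disjoint. Suppose $[a]_R \cap [b]_R \neq \varnothing$ and pick $c$ in the intersection, so $\langle c, a\rangle \in R$ and $\langle c, b\rangle \in R$. Then symmetry and transitivity yield $\langle a, b\rangle \in R$, and one further use of symmetry and transitivity shows, for any $x \in X$, that $\langle x, a\rangle \in R$ holds if and only if $\langle x, b\rangle \in R$ holds; hence $[a]_R = [b]_R$. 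Thus the blocks of the quotient set are precisely the classes $[a]_R$.

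This argument is entirely routine and I do not expect any real obstacle. The only step requiring a moment's care is the last one — deducing $[a]_R = [b]_R$ from the existence of a single common element — which is where reflexivity, symmetry, and transitivity all genuinely come into play; everything else is a matter of unwinding the definitions of $R_P$, of equivalence classes, and of a partition.
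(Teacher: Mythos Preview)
Your argument is correct and entirely standard. The paper itself does not prove this proposition but simply refers the reader to \cite[Chapter~II, \S{}~5]{KurMost}, so there is no in-paper proof to compare against.
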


(For the proof of Proposition~\ref{p2.2}, see, for example, \cite[Chapter~II, \S{}~5]{KurMost}.)

For every pseudometric space \((X, d)\), we define a binary relation \(\coherent{0}\) on \(X\) as
\begin{equation}\label{ch2:p1:e1}
(x \coherent{0} y) \Leftrightarrow (d(x, y) = 0), \quad x, y \in X
\end{equation}
and, similarly~\eqref{e1.1}, we write
\begin{equation}\label{e2.2}
[a]_0 := \{x \in X \colon d(a, x) = 0\},
\end{equation}
for every \(a \in X\).

\begin{proposition}\label{ch2:p1}
Let \(X\) be a nonempty set and let \(d \colon X^{2} \to \RR\) be a pseudometric on \(X\). Then \({\coherent{0}}\) is an equivalence relation on \(X\) and the function \(\delta_d\),
\begin{equation}\label{e1.1.5}
\delta_d(\alpha, \beta) := d(x, y), \quad x \in \alpha \in X/{\coherent{0}}, \quad y \in \beta \in X/{\coherent{0}},
\end{equation}
is a correctly defined metric on \(X/{\coherent{0}}\), where \(X/{\coherent{0}}\) is the quotient set of \(X\) with respect to \(\coherent{0}\).
\end{proposition}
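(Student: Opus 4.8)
The plan is to verify separately that $\coherent{0}$ is an equivalence relation and that $\delta_d$ is a well-defined metric, the second part being where essentially all the content lies.

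For the first part I would simply read off the three properties from Definition~\ref{ch2:d2}. Reflexivity, $x \coherent{0} x$, is the identity $d(x,x) = 0$; symmetry of $\coherent{0}$ is symmetry of $d$; and transitivity follows from a single application of the triangle inequality together with non-negativity: if $d(x,y) = 0$ and $d(y,z) = 0$, then $0 \leqslant d(x,z) \leqslant d(x,y) + d(y,z) = 0$. Proposition~\ref{p2.2} then lets me pass freely between the relation $\coherent{0}$ and the partition of $X$ into the classes $[a]_0$ described in~\eqref{e2.2}.

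For the second part, the crucial step---and the only one that is not completely automatic---is to show that the right-hand side of~\eqref{e1.1.5} does not depend on the chosen representatives: if $x, x' \in \alpha$ and $y, y' \in \beta$, then $d(x,y) = d(x',y')$. I would obtain this from a ``quadrilateral'' estimate, namely two nested applications of the triangle inequality, $d(x',y') \leqslant d(x',x) + d(x,y) + d(y,y')$, in which the first and last terms vanish because $x \coherent{0} x'$ and $y \coherent{0} y'$; interchanging the primed and unprimed points gives the reverse inequality. Once $\delta_d$ is known to be well defined, the metric axioms are inherited from those of $d$ via~\eqref{e1.1.5}: non-negativity and symmetry are immediate; $\delta_d(\alpha,\alpha) = 0$ follows by taking a single representative, while $\delta_d(\alpha,\beta) = 0$ forces $d(x,y) = 0$ for representatives $x \in \alpha$, $y \in \beta$, hence $x \coherent{0} y$ and $\alpha = [x]_0 = [y]_0 = \beta$; and the triangle inequality for $\delta_d$ is just the triangle inequality for $d$ read off representatives $x \in \alpha$, $y \in \beta$, $z \in \gamma$. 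I expect no genuine obstacle beyond isolating the representative-independence as the key point; everything else is routine.
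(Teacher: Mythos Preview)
Your argument is correct and is exactly the standard one; each step you outline goes through without difficulty. The only comparison to make is that the paper does not actually prove Proposition~\ref{ch2:p1} at all: it simply refers the reader to \cite[Ch.~4, Th.~15]{Kelley1975}. Your write-up therefore supplies what the paper omits, and the route you take (verify the three properties of \(\coherent{0}\) directly from Definition~\ref{ch2:d2}, isolate representative-independence via the quadrilateral estimate, then read off the metric axioms for \(\delta_d\) from those of \(d\)) is precisely the argument one finds in Kelley.
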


The proof of Proposition~\ref{ch2:p1} can be found in \cite[Ch.~4, Th.~15]{Kelley1975}.
\medskip

In what follows we will say that the metric space \((X / \coherent{0}, \delta_d)\) is the \emph{metric reflection} of the pseudometric space \((X, d)\).

Let us introduce now the basic for us morphisms  of the pseudometric spaces. The following can be considered as a generalization of the concept of isometry of metric spaces.

\begin{definition}[{\cite{BD2021a}}]\label{d2.5}
Let \((X, d)\) and \((Y, \rho)\) be pseudometric spaces. A mapping \(\Phi \colon X \to Y\) is a \emph{pseudoisometry} of \((X, d)\) and \((Y, \rho)\) if:
\begin{enumerate}
\item \label{d2.11:s1} \(\rho(\Phi(x), \Phi(y)) = d(x, y)\) holds for all \(x\), \(y \in X\).
\item \label{d2.11:s2} For every \(u \in Y\) there is \(v \in X\) such that \(\rho(\Phi(v), u) = 0\).
\end{enumerate}
We say that two pseudometric spaces are \emph{pseudoisometric} if there is a pseudoisometry of these spaces.
\end{definition}

\begin{remark}\label{r1.12}
The concept of isometry of metric spaces can be extended to pseudometric spaces in various non-equivalent ways. John Kelley \cite{Kelley1975} define the isometries of pseudometric spaces \((X, d)\) and \((Y, \rho)\) as the distance-preserving surjections \(X \to Y\). It is clear that every isometry in Kelley's sense is a pseudoisometry. Another generalization of isometries is the combinatorial similarities of pseudometric spaces (see \cite{DLAMH2020, DovBBMSSS2020}).
\end{remark}

\begin{example}\label{ex2.5}
Let \((Y, \rho)\) be a pseudometric space and let \((Y / \coherent{0}, \delta_{\rho})\) be the metric reflection of \((Y, \rho)\). Then the mapping \(\pi_Y \colon Y \to Y / \coherent{0}\),
\begin{equation}\label{ex2.5:e1}
\pi_Y(y) = \{x \in Y \colon \rho(x, y) = 0\}, \quad y \in Y,
\end{equation}
is a pseudoisometry of the pseudometric space \((Y, \rho)\) and its metric reflection \((Y / \coherent{0}, \delta_{\rho})\). The Axiom of Choice implies the existence of a pseudoisometry \(\Psi \colon Y / \coherent{0} \to Y\) such that \(\pi_Y(\Psi(\alpha)) = \alpha\) for every \(\alpha \in Y / \coherent{0}\).
\end{example}

The following theorem is well-known (see, for example, Theorem~15 on page~123 of \cite{Kelley1975}).

\begin{theorem}\label{t1.14}
Let \((Y, \rho)\) be a nonempty pseudometric space and let \((Y / \coherent{0}, \delta_{\rho})\) be the metric reflection of \((Y, \rho)\). Then the following statements are equivalent:
\begin{enumerate}
\item \label{t1.14:c1} \((Y, \rho)\) is complete.
\item \label{t1.14:c2} \((Y/\coherent{0}, \delta_{\rho})\) is complete.
\end{enumerate}
\end{theorem}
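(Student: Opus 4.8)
The plan is to establish the equivalence by transferring Cauchy sequences and their limits back and forth along the canonical projection $\pi_Y \colon Y \to Y/\coherent{0}$ from Example~\ref{ex2.5} and a fixed section $\Psi \colon Y/\coherent{0} \to Y$ with $\pi_Y \circ \Psi = \mathrm{id}$. Since $\pi_Y$ and $\Psi$ are pseudoisometries, statement~\ref{d2.11:s1} of Definition~\ref{d2.5} gives $\delta_\rho(\pi_Y(x), \pi_Y(y)) = \rho(x, y)$ for all $x, y \in Y$ and $\rho(\Psi(\alpha), \Psi(\beta)) = \delta_\rho(\alpha, \beta)$ for all $\alpha, \beta \in Y/\coherent{0}$. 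The key observation is that a mapping satisfying~\ref{d2.11:s1} sends Cauchy sequences to Cauchy sequences and preserves convergence in the sense that $\rho(x_n, a) \to 0$ iff $\delta_\rho(\pi_Y(x_n), \pi_Y(a)) \to 0$; this is immediate from the distance-preserving identity and the definitions of Cauchy sequence and convergence.

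For the implication \ref{t1.14:c1}$\Rightarrow$\ref{t1.14:c2}, I would start with a Cauchy sequence $(\alpha_n)_{n \in \NN} \subseteq Y/\coherent{0}$, push it down to $(\Psi(\alpha_n))_{n \in \NN} \subseteq Y$, which is Cauchy in $(Y,\rho)$ by~\ref{d2.11:s1}, hence converges to some $a \in Y$ by completeness of $(Y,\rho)$. Then $\delta_\rho(\alpha_n, \pi_Y(a)) = \delta_\rho(\pi_Y(\Psi(\alpha_n)), \pi_Y(a)) = \rho(\Psi(\alpha_n), a) \to 0$, so $(\alpha_n)$ converges in the metric reflection. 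Conversely, for \ref{t1.14:c2}$\Rightarrow$\ref{t1.14:c1}, take a Cauchy sequence $(x_n)_{n \in \NN} \subseteq Y$; then $(\pi_Y(x_n))_{n \in \NN}$ is Cauchy in $Y/\coherent{0}$, so it converges to some $\gamma \in Y/\coherent{0}$, and writing $a = \Psi(\gamma)$ we get $\rho(x_n, a) = \delta_\rho(\pi_Y(x_n), \pi_Y(a)) = \delta_\rho(\pi_Y(x_n), \gamma) \to 0$, using $\pi_Y(\Psi(\gamma)) = \gamma$. Thus $(x_n)$ converges in $(Y,\rho)$.

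There is no serious obstacle here; the result is essentially bookkeeping once one records the two auxiliary facts that any function obeying the distance equality~\ref{d2.11:s1} both preserves the Cauchy property and ``reflects'' convergence through the distance. The only point deserving care is that in a pseudometric space the limit of a sequence is not unique, so one must phrase convergence purely via $\rho(x_n, a) \to 0$ and never speak of ``the'' limit; this is exactly why the surjectivity-type condition~\ref{d2.11:s2} (equivalently, the existence of the section $\Psi$) is needed to produce an actual point $a \in Y$ out of a limit living in the quotient. I would present the two auxiliary facts as a short preliminary observation (or fold them inline), and then the two implications become three-line computations as above.
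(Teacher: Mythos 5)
Your proof is correct. The paper does not prove Theorem~\ref{t1.14} at all --- it cites Kelley (\cite{Kelley1975}, Theorem~15, p.~123) --- and your argument, transferring Cauchy sequences and limits along \(\pi_Y\) and the section \(\Psi\) using the distance-preserving identity \(\delta_\rho(\pi_Y(x),\pi_Y(y))=\rho(x,y)\), is exactly the standard one. The only point worth flagging is that the lifting step in \ref{t1.14:c1}\(\Rightarrow\)\ref{t1.14:c2} genuinely requires a choice principle (the paper's Remark~\ref{r1.13} notes the equivalence with the Countable Axiom of Choice); your use of the global section \(\Psi\) invokes full AC, which is harmless here since the paper itself constructs \(\Psi\) via AC in Example~\ref{ex2.5}, though countable choice of representatives \(y_n\in\alpha_n\) would suffice.
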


\begin{remark}\label{r1.13}
It was proved in \cite{HK2015CMUC} that the Countable Axiom of Choice (\textbf{CAC}) is equivalent to the statement: ``A pseudometric space is complete iff its metric reflection is complete''.
\end{remark}

The results of the paper are presented as follows.

It will be shown in Theorem~\ref{t2.7} of Section~\ref{sec2} that there is no non-empty pseudometric space that would be both complete and closed in each of its pseudometric superspace. Moreover, Proposition~\ref{p2.6} shows that the equivalence \(\text{``closedness} = \text{completeness''}\) in the set of all subsets of a given pseudometric space is valid for the complete metric spaces.

Theorem~\ref{t2.9} gives us a characterization of the complete subsets of complete pseudometric spaces by describing the boundaries of these subsets, that can be considered as a pseudometric modification of Theorem~\ref{t1.10}. A dual of Theorem~\ref{t2.9} is given in Theorem~\ref{t2.10}.

Theorem~\ref{t2.13} describes the complete pseudometric spaces \((Y, \rho)\) as closed subspaces of \(\CEC(Y, \rho)\)-spaces. An extremal property of the classes \(\CEC(Y, \rho)\) is proved in Proposition~\ref{p2.15}.

In Theorem~\ref{t3.6}, we expand Theorem~\ref{t1.14} to arbitrary pseudoisometric spaces. The final result of the paper, Proposition~\ref{p3.7}, characterized the relation ``to be pseudoisometric'' by minimal property.

The results of the paper were motivated by Kelly's classical monograph \cite{Kelley1975} and by study of infinitesimal and asymptotic properties of metric spaces \cite{DD2010, DAK2010, DAK2011, DM2011RRMPA, BD2013BBMSSS, DAK2013, BD2014AASFM, ADK2016A, BD2018, BD2019JMSNY, BD2019JMSNYa, AK2021PdlM}.

\section{On completeness and closedness in pseudometric spaces}
\label{sec2}

Let us start from the following reformulation of Theorem~\ref{t1.14}.

\begin{lemma}\label{l2.1}
Let \((X, d)\) be a pseudometric space and let \(A\) be a nonempty subset of \(X\). Then \(A\) is complete in \((X, d)\) iff the set \(\cup_{a \in A} [a]_0\) is complete in \((X, d)\), where \([a]_0\) is defined by~\eqref{e2.2}.
\end{lemma}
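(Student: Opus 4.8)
The plan is to reduce the statement to Theorem~\ref{t1.14} applied to a cleverly chosen auxiliary pseudometric subspace, exploiting the fact that moving between a point and its zero-distance class changes nothing metrically. Throughout, write $\tilde A = \bigcup_{a \in A} [a]_0$; clearly $A \subseteq \tilde A \subseteq X$, and for every $x \in \tilde A$ there is some $a \in A$ with $d(x,a) = 0$. The key elementary observation, used repeatedly, is that if $d(x,x') = 0$ then $d(x,y) = d(x',y)$ for all $y \in X$ (an immediate consequence of the triangle inequality together with symmetry and non-negativity). In particular, a sequence converges to a point $s$ if and only if it converges to every point in $[s]_0$, and a sequence $(x_n)$ is Cauchy in $(X,d)$ if and only if any sequence $(x_n')$ with $d(x_n, x_n') = 0$ for all $n$ is Cauchy.

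First I would prove the implication ``$\tilde A$ complete $\Rightarrow$ $A$ complete''. This direction is essentially trivial: any Cauchy sequence in $A$ is a Cauchy sequence in $\tilde A$ (since $A \subseteq \tilde A$), hence converges to some $s \in \tilde A$; but then $s \in [a]_0$ for some $a \in A$, and by the observation above the sequence also converges to $a \in A$. Hence $A$ is complete.

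For the converse, ``$A$ complete $\Rightarrow$ $\tilde A$ complete'', I would take a Cauchy sequence $(x_n)_{n \in \NN} \subseteq \tilde A$. For each $n$, choose (using the definition of $\tilde A$) a point $a_n \in A$ with $d(x_n, a_n) = 0$; by the observation, $(a_n)_{n\in\NN}$ is then a Cauchy sequence in $A$, so it converges to some $s \in A \subseteq \tilde A$, and since $d(x_n,a_n)=0$ the original sequence $(x_n)$ also converges to $s$. Thus $\tilde A$ is complete. Strictly speaking this argument needs a choice of the $a_n$; alternatively, and in the spirit of Theorem~\ref{t1.14} and Remark~\ref{r1.13}, one can route the proof through metric reflections: note that $\tilde A$ is exactly the union of all $\coherent{0}$-classes meeting $A$, so the metric reflections of the subspaces $(A, d|_{A^2})$ and $(\tilde A, d|_{\tilde A^2})$ coincide (both equal $\{[a]_0 : a \in A\}$ with the metric $\delta_d$), and then Theorem~\ref{t1.14} gives that $A$ is complete iff this common metric reflection is complete iff $\tilde A$ is complete. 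This second route makes the claimed status as a ``reformulation of Theorem~\ref{t1.14}'' transparent and sidesteps any explicit selection.

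I expect the only real subtlety to be bookkeeping rather than a genuine obstacle: one must be careful that ``complete in $(X,d)$'' for a subset $S$ means Cauchy sequences \emph{lying in $S$} converge \emph{to a point of $S$} (Definition~\ref{d2.6}), so both the hypothesis and the conclusion are about sequences internal to the respective sets; the bridge between them is precisely the replacement $x_n \leftrightarrow a_n$ within a zero-distance class, which is metrically invisible. If one prefers to avoid invoking even the Countable Axiom of Choice, the metric-reflection route is the cleanest, since it quotes Theorem~\ref{t1.14} as a black box.
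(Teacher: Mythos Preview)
Your proposal is correct. The second route you sketch---observing that the metric reflections of \((A,d|_{A^2})\) and \((\tilde A,d|_{\tilde A^2})\) agree up to isometry and then invoking Theorem~\ref{t1.14}---is exactly the paper's proof, which reads in full: ``The metric reflections of the spaces \(A\) and \(\cup_{a\in A}[a]_0\) are isometric metric spaces. Hence, they are either simultaneously complete or simultaneously incomplete. Now the conclusion of the lemma follows from Theorem~\ref{t1.14}.'' One small precision: the two reflections do not literally \emph{coincide} as sets, since the equivalence classes are taken inside \(A\) and inside \(\tilde A\) respectively (so the elements are \([a]_0\cap A\) versus \([a]_0\)); the paper's word ``isometric'' is the accurate one, though the natural bijection is obvious. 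Your first, direct argument via replacing each \(x_n\) by some \(a_n\in A\) with \(d(x_n,a_n)=0\) is a valid alternative that the paper does not give; as you note, it uses countable choice, which the metric-reflection route packages into the citation of Theorem~\ref{t1.14}.
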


\begin{proof}
The metric reflections of the spaces \(A\) and \(\cup_{a \in A} [a]_0\) are isometric metric spaces. Hence, they are either simultaneously complete or simultaneously incomplete. Now the conclusion of the lemma follows from Theorem~\ref{t1.14}.
\end{proof}

\begin{lemma}\label{l2.2}
Let \(A\) be a nonempty closed subset of a pseudometric space \((X, d)\). Then the equality
\begin{equation}\label{l2.2:e1}
A = \bigcup_{a \in A} [a]_0
\end{equation}
holds.
\end{lemma}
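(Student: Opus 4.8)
The plan is to prove the two inclusions $\bigcup_{a \in A}[a]_0 \subseteq A$ and $A \subseteq \bigcup_{a \in A}[a]_0$ separately, of which only the first requires work.

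First I would dispose of the trivial inclusion. Since $d(a,a) = 0$ for every $a \in A$, we have $a \in [a]_0$ for every $a \in A$, hence $A \subseteq \bigcup_{a \in A}[a]_0$. This needs no hypothesis on $A$.

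For the reverse inclusion, I would take an arbitrary $x \in \bigcup_{a \in A}[a]_0$ and fix $a \in A$ with $x \in [a]_0$, i.e. $d(a, x) = 0$. The point is then that $x$ lies in the closure of $A$: for every radius $r > 0$ the open ball $B_r(x)$ contains $a$, since $d(x, a) = 0 < r$, so $B_r(x) \cap A \neq \varnothing$. Because $A$ is closed in $(X, d)$, its closure equals $A$, and therefore $x \in A$. Concretely one can phrase this via the constant sequence $(a_n)_{n \in \NN}$ with $a_n = a$, which lies in $A$ and converges to $x$ since $d(a_n, x) = 0 \to 0$; closedness of $A$ then forces the limit $x$ to belong to $A$. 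Combining the two inclusions gives~\eqref{l2.2:e1}.

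There is essentially no obstacle here: the only subtlety is making sure one uses the right characterization of ``closed'' in a pseudometric space (namely, closed under limits of sequences, equivalently equal to its own closure in the topology $\tau$ generated by $d$), and observing that a point at zero distance from a point of $A$ is automatically a limit of a sequence in $A$. I would keep the write-up to a few lines.
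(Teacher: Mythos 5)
Your proof is correct. The only point that needs care is the one you flag yourself: from $d(x,a)=0$ you conclude $x\in\overline{A}$, and for this you should note that every neighbourhood of $x$ in the topology generated by $d$ contains an open ball $B_r(c)$ with $x\in B_r(c)$, and then $d(c,a)\leqslant d(c,x)+d(x,a)=d(c,x)<r$ puts $a$ in that ball; this is exactly the triangle-inequality step that does the real work. Your route differs slightly in packaging from the paper's: the paper argues on the complement, writing the open set $X\setminus A$ as a union of open balls and showing that every open ball $B$ satisfies $[b]_0\subseteq B$ for each $b\in B$, so that $X\setminus A$ is saturated under the zero-distance relation and hence so is $A$; you instead show directly that any $x$ with $d(x,a)=0$ for some $a\in A$ is a closure point of $A$ and invoke $\overline{A}=A$. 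The two arguments rest on the identical observation (open balls cannot separate points at zero distance), but yours is arguably the more economical write-up, while the paper's formulation isolates the reusable fact that every open ball, and hence every open set, is a union of equivalence classes $[b]_0$.
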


\begin{proof}
It is clear that \(A \subseteq \cup_{a \in A} [a]_0\). Thus, to prove \eqref{l2.2:e1}, it suffices to show that
\begin{equation}\label{l2.2:e2}
[a]_0 \subseteq A
\end{equation}
holds for every \(a \in A\). Let us do it.

Since \(A\) is closed in \((X, d)\), the set \(X \setminus A\) is open in \((X, d)\) and, consequently, there is \(\mbf{B}^1 \subseteq \mbf{B}_X\) such that
\begin{equation}\label{l2.2:e3}
X \setminus A = \bigcup_{B \in \mbf{B}^1} B.
\end{equation}
Using Proposition~\ref{p2.2}, we see that \eqref{l2.2:e2} holds for every \(a \in A\) if and only if
\begin{equation}\label{l2.2:e4}
[b]_0 \subseteq X \setminus A
\end{equation}
holds for every \(b \in X \setminus A\). From \eqref{l2.2:e3} it follows that \eqref{l2.2:e4} holds for every \(b \in X \setminus A\) if
\begin{equation}\label{l2.2:e5}
[b]_0 \subseteq B
\end{equation}
holds for every \(b \in B\) and every \(B \in \mbf{B}_X\). Now \eqref{l2.2:e5} can be directly proved by using the definition of open balls and the triangle inequality.
\end{proof}

For the case when \(A\) is a complete subset of \((X, d)\), equality~\eqref{l2.2:e1} is also sufficient for the closedness of \(A\).

\begin{proposition}\label{p2.7}
Let \((X, d)\) be a pseudometric space and let \(A\) be a nonempty complete subset of \((X, d)\). Then \(A\) is closed in \((X, d)\) if and only if
\begin{equation}\label{p2.7:e1}
[a]_0 \subseteq A
\end{equation}
holds for every \(a \in A\).
\end{proposition}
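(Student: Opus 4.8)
The plan is to treat the two implications separately, with essentially all of the content in the sufficiency direction. For necessity, suppose $A$ is closed in $(X,d)$. Then Lemma~\ref{l2.2} applies word for word and yields $A = \bigcup_{a \in A}[a]_0$, so in particular $[a]_0 \subseteq A$ for every $a \in A$; note that completeness of $A$ is not needed here at all.

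For sufficiency, assume $A$ is complete and $[a]_0 \subseteq A$ for every $a \in A$, and aim to show that $X \setminus A$ is open, equivalently that the topological closure $\overline{A}$ is contained in $A$. First I would record the standard fact that in a pseudometric space the balls $B_{1/n}(x)$, $n \in \NN$, form a neighborhood base at $x$: given an open set $U \ni x$, since $\mbf{B}_X$ is a base there is a ball with $x \in B_r(c) \subseteq U$, and then $B_{r'}(x) \subseteq B_r(c)$ for $r' = r - d(c,x) > 0$ by the triangle inequality, so $B_{1/n}(x) \subseteq U$ as soon as $1/n \leqslant r'$. Consequently $x \in \overline{A}$ if and only if $B_{1/n}(x) \cap A \neq \varnothing$ for every $n \in \NN$. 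Now pick, for each $n$, a point $x_n \in B_{1/n}(x) \cap A$. The triangle inequality gives $d(x_n, x_m) < \tfrac1n + \tfrac1m$, so $(x_n)_{n \in \NN}$ is a Cauchy sequence lying in $A$; by completeness of $A$ it converges to some $a \in A$, i.e.\ $d(x_n, a) \to 0$. Then $d(x, a) \leqslant d(x, x_n) + d(x_n, a) \to 0$ forces $d(x, a) = 0$, that is, $x \in [a]_0 \subseteq A$. Hence $\overline{A} \subseteq A$ and $A$ is closed.

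The step that deserves care — and the only place where the hypothesis $[a]_0 \subseteq A$ is genuinely used — is the passage from $d(x,a) = 0$ to $x \in A$. In a pseudometric space the point $x \in \overline{A}$ we started with may differ from the limit $a \in A$ of the constructed sequence while having zero distance to it, so completeness alone cannot place $x$ in $A$; the condition $[a]_0 \subseteq A$ is precisely what rules out such an $x \in \overline{A} \setminus A$, and without it the statement fails (take $X = \{p, q\}$ with $d \equiv 0$ and $A = \{p\}$, which is complete but not closed). Everything else is the routine pseudometric adaptation of the classical argument behind Theorem~\ref{t1.10}.
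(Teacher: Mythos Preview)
Your proof is correct and follows essentially the same approach as the paper: both directions invoke Lemma~\ref{l2.2} for necessity, and for sufficiency both pick a point in the closure (the paper frames this by contradiction via a point $c\in X\setminus A$ whose every ball meets $A$), build a sequence in $A$ converging to it, use completeness to obtain a limit $a\in A$, and then the triangle inequality plus $[a]_0\subseteq A$ to conclude. Your added detail on the countable neighborhood base and the counterexample are extras, not a different method.
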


\begin{proof}
If \(A\) is closed, then \eqref{p2.7:e1} holds for every \(a \in A\) by Lemma~\ref{l2.2}. Let \eqref{p2.7:e1} hold for every \(a \in A\). If \(A\) is not a closed set, then \(X \setminus A\) is not an open set and, consequently, there is \(c \in X \setminus A\) such that
\[
B_r(c) \cap A \neq \varnothing
\]
for every \(r > 0\), where \(B_r(c)\) is the open ball with the center \(c\) and the radius \(r\). Consequently, there is a convergent sequence \(\seq{a_n}{n} \subseteq A\),
\begin{equation}\label{p2.7:e5}
\lim_{n \to \infty} d(a_n, c) = 0.
\end{equation}
Since \(A\) is complete and every convergent sequence is a Cauchy sequence, there is \(a \in A\) such that
\begin{equation}\label{p2.7:e6}
\lim_{n \to \infty} d(a_n, a) = 0.
\end{equation}
Using the triangle inequality and \eqref{p2.7:e5}--\eqref{p2.7:e6}, we obtain
\[
d(c, a) \leqslant \limsup_{n \to \infty} d(c, a_n) + \limsup_{n \to \infty} d(a_n, a) = 0.
\]
Therefore, \(c \in [a_0]\) holds, that implies \(c \in A\) by \eqref{p2.7:e1}. Thus, we obtain the contradiction, \(c \in A \cap (X \setminus A) = \varnothing\).
\end{proof}

\begin{corollary}\label{c2.8}
Let \((X, d)\) be a pseudometric space and let \(A\) be a nonempty complete subset of \(X\). Write \(\overline{A}\) for the closure of \(A\). Then \(\overline{A}\) is also a complete subset of \(X\) and the equality
\begin{equation}\label{c2.8:e0}
\overline{A} = \bigcup_{a \in A} [a]_0
\end{equation}
holds.
\end{corollary}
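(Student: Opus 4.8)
The plan is to derive everything from Proposition~\ref{p2.7} and Lemma~\ref{l2.2} together with the characterization of complete subsets via equivalence classes that is implicit in Lemma~\ref{l2.1}. First I would establish that $\overline{A}$ is complete. By Lemma~\ref{l2.1}, the set $A$ is complete in $(X,d)$ if and only if $B := \bigcup_{a \in A}[a]_0$ is complete in $(X,d)$; so it suffices to identify $\overline{A}$ with $B$. The inclusion $A \subseteq B$ is trivial, and $B$ is contained in every closed set containing $A$: indeed, if $F$ is closed and $A \subseteq F$, then Lemma~\ref{l2.2} gives $F = \bigcup_{f \in F}[f]_0 \supseteq \bigcup_{a \in A}[a]_0 = B$. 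Hence $B \subseteq \overline{A}$.

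For the reverse inclusion $\overline{A} \subseteq B$, I would show that $B$ itself is closed. The set $B$ is a union of $\coherent{0}$-equivalence classes, so by Proposition~\ref{p2.2} its complement $X \setminus B$ is also a union of such classes; by the same argument as in the proof of Lemma~\ref{l2.2} (namely that each ball $B_r(c)$ is a union of equivalence classes, which follows from the triangle inequality), the set $X \setminus B$ is open provided we can write it as a union of balls — but more directly, I would simply invoke Proposition~\ref{p2.7}: since $B$ is complete (by Lemma~\ref{l2.1}, as $A$ is complete) and satisfies $[b]_0 \subseteq B$ for every $b \in B$ by its very construction, Proposition~\ref{p2.7} yields that $B$ is closed. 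A closed set containing $A$ must contain $\overline{A}$, so $\overline{A} \subseteq B$. Combining the two inclusions gives $\overline{A} = B = \bigcup_{a \in A}[a]_0$, which is equation~\eqref{c2.8:e0}, and since $B$ is complete, so is $\overline{A}$.

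The only point requiring a little care is verifying the hypotheses of Proposition~\ref{p2.7} for the set $B$: completeness of $B$ is handed to us by Lemma~\ref{l2.1}, and the condition $[b]_0 \subseteq B$ for all $b \in B$ holds because if $b \in B$ then $b \in [a]_0$ for some $a \in A$, whence $[b]_0 = [a]_0 \subseteq B$ by transitivity of $\coherent{0}$ (Proposition~\ref{ch2:p1}). I do not expect any genuine obstacle here; the proof is essentially a bookkeeping assembly of the three preceding results. The one thing to watch is that $A$ is assumed nonempty in Corollary~\ref{c2.8}, which guarantees $B$ is nonempty so that Proposition~\ref{p2.7} and Lemma~\ref{l2.1} apply without an empty-set exception.
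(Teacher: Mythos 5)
Your proof is correct and follows essentially the same route as the paper, whose proof of Corollary~\ref{c2.8} is simply the one-line citation of Lemma~\ref{l2.1} and Proposition~\ref{p2.7}; you have filled in exactly the intended details (completeness of \(B=\bigcup_{a\in A}[a]_0\) via Lemma~\ref{l2.1}, closedness of \(B\) via Proposition~\ref{p2.7}, and the identification \(\overline{A}=B\) via Lemma~\ref{l2.2}).
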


\begin{proof}
It follows from Lemma~\ref{l2.1} and Proposition~\ref{p2.7}.
\end{proof}

\begin{proposition}\label{p2.6}
Let \((X, d)\) be a pseudometric space. Then the following conditions are equivalent:
\begin{enumerate}
\item\label{p2.6:c1} \((X, d)\) is a metric space.
\item\label{p2.6:c2} The set of all closed subsets of \((X, d)\) coincides with the set of all complete subsets of \((X, d)\).
\end{enumerate}
\end{proposition}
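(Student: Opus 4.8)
The plan is to establish the two implications separately, leaning on Lemma~\ref{l2.2} and Proposition~\ref{p2.7}.

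First I would prove \ref{p2.6:c2}\(\Rightarrow\)\ref{p2.6:c1} by contraposition. Suppose \((X, d)\) is not a metric space, so there are distinct \(p, q \in X\) with \(d(p, q) = 0\); equivalently, \(q \in [p]_0\) and \([p]_0 \neq \{p\}\). Consider the singleton \(A = \{p\}\). Every Cauchy sequence contained in \(A\) is the constant sequence \((p, p, \dots)\), which converges to \(p \in A\), so \(A\) is complete. If \(A\) were closed, then Lemma~\ref{l2.2} would give \(A = \bigcup_{a \in A} [a]_0 = [p]_0\), forcing \(q \in A\), a contradiction. Hence \(A\) is a complete subset that is not closed, so the families of closed and of complete subsets do not coincide. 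This singleton construction, which uses no completeness of the ambient space, is the cleanest part of the argument.

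For \ref{p2.6:c1}\(\Rightarrow\)\ref{p2.6:c2} I would use that in a metric space \([a]_0 = \{a\}\) for every \(a\). If \(A\) is a complete subset, then \([a]_0 = \{a\} \subseteq A\) holds trivially for each \(a \in A\), so Proposition~\ref{p2.7} yields at once that \(A\) is closed; thus every complete subset is closed. The reverse inclusion, that every closed subset is complete, is exactly Theorem~\ref{t1.11}.

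The main obstacle sits precisely here. Theorem~\ref{t1.11} requires the ambient space to be complete, and the implication ``closed \(\Rightarrow\) complete'' truly needs this, since the whole set \(X\) is always closed and must therefore be complete for \ref{p2.6:c2} to hold. Taking \(A = X\) in \ref{p2.6:c2} shows that condition \ref{p2.6:c2} already forces \(X\) to be complete; interpreting \ref{p2.6:c1} in the light of this (a complete metric space, matching the phrasing used in the Introduction) lets Theorem~\ref{t1.11} supply the missing inclusion and completes the equivalence.
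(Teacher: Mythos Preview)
Your argument mirrors the paper's almost exactly: for \(\ref{p2.6:c2}\Rightarrow\ref{p2.6:c1}\) the paper also picks a singleton \(\{a_0\}\), observes it is complete, and uses Corollary~\ref{c2.8} (equivalently your appeal to Lemma~\ref{l2.2}) to get \(\overline{\{a_0\}}=[a_0]_0\neq\{a_0\}\); for \(\ref{p2.6:c1}\Rightarrow\ref{p2.6:c2}\) the paper simply writes ``follows from Theorem~\ref{t1.11}''. So on the level of method you are aligned with the paper.

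The obstacle you flag is genuine and is not addressed in the paper's proof either. Theorem~\ref{t1.11} is stated for a \emph{complete} metric space, and the implication ``closed \(\Rightarrow\) complete'' fails without that hypothesis (e.g.\ \(X=\mathbb{Q}\) with the usual metric satisfies \ref{p2.6:c1} but not \ref{p2.6:c2}, since \(X\) itself is closed yet not complete). The paper's informal description of Proposition~\ref{p2.6} in the Introduction indeed says ``complete metric spaces'', which is the intended reading of \ref{p2.6:c1}. Your attempted rescue---observing that \ref{p2.6:c2} forces completeness of \(X\) and then ``interpreting \ref{p2.6:c1} in the light of this''---does not, strictly speaking, salvage the implication \(\ref{p2.6:c1}\Rightarrow\ref{p2.6:c2}\) as written, since one cannot assume a consequence of \ref{p2.6:c2} while proving \ref{p2.6:c2}. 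The clean fix is exactly what you suggest implicitly: amend \ref{p2.6:c1} to read ``\((X,d)\) is a complete metric space''. With that amendment your proof (and the paper's) goes through verbatim, and your observation that \ref{p2.6:c2} with \(A=X\) yields completeness provides the extra half of \(\ref{p2.6:c2}\Rightarrow\ref{p2.6:c1}\).
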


\begin{proof}
\(\ref{p2.6:c1} \Rightarrow \ref{p2.6:c2}\). The validity of this implication follows from Theorem~\ref{t1.11}.

\(\ref{p2.6:c2} \Rightarrow \ref{p2.6:c1}\). Let \ref{p2.6:c2} hold. If the pseudometric \(d\) is not a metric, then there are different points \(a_0\), \(b_0 \in X\) such that
\begin{equation}\label{p2.6:e1}
d(a_0, b_0) = 0.
\end{equation}
The singleton \(A_0 = \{a_0\}\) is a complete subset of \((X, d)\). Hence, \eqref{c2.8:e0} implies the equality \(\ol{A_0} = [a_0]_0\). Equality \eqref{p2.6:e1} implies that \(b_0 \in \ol{A_0} \setminus A\). Thus, \(A_0\) is not closed subset in \((X, d)\). The validity of \(\ref{p2.6:c2} \Rightarrow \ref{p2.6:c1}\) is proved.
\end{proof}

\begin{theorem}\label{t2.7}
Let \((X, d)\) be a complete pseudometric space. Then the following conditions are equivalent:
\begin{enumerate}
\item \label{t2.7:c1} \(X\) is empty.
\item \label{t2.7:c2} \(X\) is a closed subset of every pseudometric superspace of \((X, d)\).
\end{enumerate}
\end{theorem}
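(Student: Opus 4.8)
The implication \(\ref{t2.7:c1} \Rightarrow \ref{t2.7:c2}\) is immediate, since the empty set is a closed subset of every topological space. The whole content of the theorem is therefore the converse, which I would prove in contrapositive form: starting from a nonempty (complete) pseudometric space \((X, d)\), I would construct a pseudometric superspace \((Z, \Delta)\) of \((X, d)\) in which \(X\) fails to be closed. Completeness will in fact play no role in this argument; it is kept in the statement only to emphasize that even completeness cannot prevent a nonempty pseudometric space from being non-closed in some superspace, in sharp contrast with Theorem~\ref{t1.10}.

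The construction is to adjoin to \(X\) a ``twin'' of one of its points. Fix \(a \in X\) and an object \(p \notin X\) (such \(p\) exists since \(X\) is a set), put \(Z = X \cup \{p\}\), and define \(\Delta \colon Z^2 \to \RR\) by \(\Delta|_{X^2} = d\), by \(\Delta(p, p) = 0\), and by \(\Delta(p, x) = \Delta(x, p) = d(a, x)\) for all \(x \in X\). I would then verify that \(\Delta\) is a pseudometric on \(Z\): non-negativity, symmetry and vanishing on the diagonal are clear from the definition, and for the triangle inequality only the configurations involving \(p\) need to be checked. For \(x, y \in X\) the inequality \(\Delta(x, p) + \Delta(p, y) \geqslant \Delta(x, y)\) reduces to \(d(a, x) + d(a, y) \geqslant d(x, y)\), and \(\Delta(x, y) + \Delta(y, p) \geqslant \Delta(x, p)\) reduces to \(d(x, y) + d(a, y) \geqslant d(a, x)\); both follow from the triangle inequality in \((X, d)\). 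Hence \((Z, \Delta)\) is a pseudometric superspace of \((X, d)\).

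Finally I would observe that \(\Delta(p, a) = d(a, a) = 0\), so for every \(r > 0\) the open ball \(B_r(p)\) in \((Z, \Delta)\) contains \(a \in X\); thus \(p\) lies in the closure of \(X\) in \((Z, \Delta)\) while \(p \notin X\), and \(X\) is not closed in \((Z, \Delta)\). (Equivalently, the class \([a]_0 = \{x \in Z \colon \Delta(a, x) = 0\}\) in \((Z, \Delta)\) contains \(p\), so \(\bigcup_{x \in X}[x]_0 \neq X\) and Lemma~\ref{l2.2} yields the same conclusion.) This contradicts \(\ref{t2.7:c2}\), which finishes the contrapositive argument. The only step requiring any care is the routine check of the triangle inequality for \(\Delta\); the genuine point is simply that in the pseudometric setting a nonempty space can always be enlarged by a point at zero distance from an existing one.
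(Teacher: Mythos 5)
Your proof is correct and uses essentially the same construction as the paper: adjoin a single new point at distance zero from a chosen point of \(X\) (your \((Z,\Delta)\) is exactly the paper's \((Y,\rho)\) with \(p = y_0\) and \(a = x_0\)). The only divergence is in the last step: the paper deduces the contradiction from Proposition~\ref{p2.7}, which is where completeness enters its argument, whereas you observe directly that \(p\) is a limit point of \(X\) not belonging to \(X\); your route is equally valid and makes explicit that completeness is not actually needed for this implication.
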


\begin{proof}
\(\ref{t2.7:c1} \Rightarrow \ref{t2.7:c2}\). This implication is valid because the empty set is closed in all topological spaces.

\(\ref{t2.7:c2} \Rightarrow \ref{t2.7:c1}\). Let \(\ref{t2.7:c2}\) hold. Suppose that \(X\) is nonvoid and consider an arbitrary \(x_0 \in X\). Let \(y_0\) be a point such that \(y_0 \notin X\). Let us define a pseudometric superspace \((Y, \rho)\) of the space \((X, d)\) as follows. Write \(Y:= X \cup \{y_0\}\) and let \(\rho \colon Y^2 \to \RR\) be a symmetric mapping such that \(\rho|_{X^2} = d\) and
\begin{equation}\label{t2.7:e1}
\rho(y_0, x) = \begin{cases}
0 \text{ if } x = y_0,\\
d(x_0, x) \text{ if } x \in X.
\end{cases}
\end{equation}
Then \(\rho\) is a pseudometric on \(Y\). The equality \(\rho|_{X^2} = d\) and Definition~\ref{d2.6} imply that \(X\) is complete in \((Y, \rho)\). Hence, by Proposition~\ref{p2.7}, the inclusion
\begin{equation}\label{t2.7:e2}
[x_0]_0 \subseteq X
\end{equation}
holds with
\begin{equation}\label{t2.7:e3}
[x_0]_0 = \{y \in Y \colon \rho(y, x_0) = 0\}.
\end{equation}
Now using \eqref{t2.7:e1} and \eqref{t2.7:e3} we obtain \(y_0 \in [x_0]_0\), that implies \(y_0 \in X\). The last membership contradicts the condition \(y_0 \notin X\). The validity \(\ref{t2.7:c2} \Rightarrow \ref{t2.7:c1}\) follows.
\end{proof}

It is interesting to compare Theorem~\ref{t2.7} with the following result from~\cite{Mos2005IJMMS}.

\begin{theorem}\label{t2.8}
Let \((X, d)\) be a quasi-pseudometric space such that \(d\) is bounded and separates points. Then the set \(X\) is closed in each quasi-pseudometric superspace of \((X, d)\) if and only if \(X = \varnothing\).
\end{theorem}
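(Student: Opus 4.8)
The plan is to dispose of the ``if'' part at once (the empty set is closed in every topological space) and to prove the ``only if'' part in contrapositive form: \emph{if \(X \neq \varnothing\), then \((X, d)\) has a quasi-pseudometric superspace in which \(X\) is not closed.} Here I take the topology of a quasi-pseudometric space \((Z, \Delta)\) to be the one whose base consists of the forward balls \(B_r(c) = \{z \in Z \colon \Delta(c, z) < r\}\), \(c \in Z\), \(r > 0\) (the standard choice, matching the pseudometric convention of the paper); as before these balls are open, so \(p \in \overline{A}\) if and only if \(B_r(p) \cap A \neq \varnothing\) for every \(r > 0\).

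First I would fix \(x_0 \in X\), choose a point \(y_0 \notin X\), set \(Y := X \cup \{y_0\}\), and let \(M \in \RR\) be an upper bound for \(d\), which exists since \(d\) is bounded. Then I would define \(\rho \colon Y^2 \to \RR\) by \(\rho|_{X^2} = d\), \(\rho(y_0, y_0) = 0\), and, for every \(x \in X\),
\[
\rho(y_0, x) = 0, \qquad \rho(x, y_0) = M,
\]
and check that \(\rho\) is a quasi-pseudometric on \(Y\), so that \((Y, \rho)\) is a quasi-pseudometric superspace of \((X, d)\). Once this is in place, \(X\) is not closed in \((Y, \rho)\): since \(\rho(y_0, x_0) = 0 < r\) for every \(r > 0\), the point \(x_0 \in X\) lies in every forward ball \(B_r(y_0)\), so \(B_r(y_0) \cap X \neq \varnothing\) for all \(r > 0\) and hence \(y_0 \in \overline{X} \setminus X\). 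This establishes the contrapositive and with it the theorem.

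The hard part --- really the only part that is not a one-line check --- is verifying that \(\rho\) satisfies the triangle inequality \(\rho(p, q) \leqslant \rho(p, r) + \rho(r, q)\). Because \(\rho\) need not be symmetric, one must run through the configurations in which one of \(p, q, r\) equals \(y_0\) (the case \(p, q, r \in X\) is the triangle inequality for \(d\), and configurations with two or three of them equal to \(y_0\) are trivial). The configurations with \(p = y_0\) or with \(q = y_0\) reduce to \(d \geqslant 0\); the configuration with \(r = y_0\) and \(p, q \in X\) reads \(d(p, q) \leqslant M + 0\), and this is precisely --- and the only place --- where the boundedness of \(d\) is used. (Note that the assumption that \(d\) separates points is not needed for this argument.)
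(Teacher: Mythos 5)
The paper does not actually prove Theorem~\ref{t2.8}: it is quoted without proof from \cite{Mos2005IJMMS} purely for comparison with Theorem~\ref{t2.7}, so there is no in-paper argument to measure you against. On its own merits your proof is correct. The ``if'' direction is indeed trivial, and your one-point extension works: all eight placements of \(y_0\) in the triangle inequality check out, the only nontrivial one being \(p,q\in X\), \(r=y_0\), which reads \(d(p,q)\leqslant M+0\) and is exactly where boundedness enters; and \(\rho(y_0,x_0)=0\) forces \(x_0\in B_r(y_0)\) for every \(r>0\), so \(y_0\in\overline{X}\setminus X\) under the forward-ball topology you fixed. Your construction is the natural asymmetric analogue of the one the authors use to prove Theorem~\ref{t2.7}: there, for a pseudometric space, they adjoin \(y_0\) with \(\rho(y_0,x)=d(x_0,x)\), i.e.\ a symmetric ``duplicate'' of \(x_0\) at distance zero; you instead make the distance zero only in the direction \(y_0\to x_0\) and cap the reverse direction at \(M\), exploiting the asymmetry available in the quasi-pseudometric setting. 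Your observation that point separation is never used is accurate; one can add that boundedness is also dispensable for the statement as written here, since the symmetric duplicate \(\rho(x,y_0)=d(x,x_0)\), \(\rho(y_0,x)=d(x_0,x)\) is always a quasi-pseudometric superspace in which \(X\) fails to be closed --- your version, however, has the merit of producing a superspace that still separates points whenever \(M>0\), which is presumably why Moshokoa's hypotheses are what they are.
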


The next our result is a natural extension of Theorem~\ref{t1.11} to the case of pseudometric spaces.

\begin{theorem}\label{t2.9}
Let \((X, d)\) be a complete pseudometric space. Then the following statements are equivalent for every \(A \subseteq X\).
\begin{enumerate}
\item\label{t2.9:s1} \(A\) is a complete subset of \((X, d)\).
\item\label{t2.9:s2} The intersection \([x]_0 \cap A\) is nonempty for every point \(x\) of the boundary \(\Fr(A)\).
\end{enumerate}
\end{theorem}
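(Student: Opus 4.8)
The plan is to establish the two implications separately, invoking Corollary~\ref{c2.8} for one of them and a short Cauchy-sequence argument for the other. I will use repeatedly that $\Fr(A) = \overline{A} \cap \overline{X \setminus A}$, so in particular $\Fr(A) \subseteq \overline{A}$, and that $x \in \overline{A}$ precisely when every ball $B_r(x)$ meets $A$. The case $A = \varnothing$ is trivial, since then $\Fr(A) = \varnothing$ and the empty set is (vacuously) complete; so I assume $A \neq \varnothing$.

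For \ref{t2.9:s1} $\Rightarrow$ \ref{t2.9:s2}, let $A$ be complete. By Corollary~\ref{c2.8} we have $\overline{A} = \bigcup_{a \in A} [a]_0$. If $x \in \Fr(A)$ then $x \in \overline{A}$, so $x \in [a]_0$ for some $a \in A$; hence $d(x, a) = 0$ and $a \in [x]_0 \cap A \neq \varnothing$. For \ref{t2.9:s2} $\Rightarrow$ \ref{t2.9:s1}, take a Cauchy sequence $(a_n)_{n \in \NN} \subseteq A$ in $(X, d)$. Since $(X, d)$ is complete, there is $x \in X$ with $\lim_{n \to \infty} d(a_n, x) = 0$. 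If $x \in A$, the sequence already converges in $A$. If $x \notin A$, then $x \in X \setminus A \subseteq \overline{X \setminus A}$ and also $x \in \overline{A}$ (every $B_r(x)$ contains some $a_n \in A$), so $x \in \Fr(A)$; by \ref{t2.9:s2} there is $a \in [x]_0 \cap A$, and the triangle inequality gives $d(a_n, a) \leqslant d(a_n, x) + d(x, a) = d(a_n, x) \to 0$, so $(a_n)$ converges to $a \in A$. In both cases the Cauchy sequence has a limit in $A$, which proves \ref{t2.9:s1}.

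The proof is short, and the points that need attention are mostly bookkeeping: in the first implication one must notice that $\Fr(A) \subseteq \overline{A}$ before applying Corollary~\ref{c2.8}, and in the second the case split on whether the ambient limit $x$ of the Cauchy sequence lies in $A$ is essential (when $x$ is interior to $A$ the hypothesis \ref{t2.9:s2} is not available, but then there is nothing to prove). The conceptual heart of the statement --- and the reason \ref{t2.9:s2} is exactly the right condition --- is that a Cauchy sequence in $A$ can fail to converge in $A$ only by accumulating at a boundary point $x$, and it can then be redirected to a genuine limit in $A$ precisely when $[x]_0$ meets $A$. As a sanity check, when $d$ is a metric each class $[x]_0$ is the singleton $\{x\}$, so \ref{t2.9:s2} reduces to $\Fr(A) \subseteq A$, i.e. to closedness of $A$, recovering Theorem~\ref{t1.11}. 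If one wishes to avoid Corollary~\ref{c2.8}, the first implication can also be proved directly: for $x \in \Fr(A) \subseteq \overline{A}$ choose $(a_n) \subseteq A$ with $d(a_n, x) \to 0$, apply completeness of $A$ to get $a \in A$ with $d(a_n, a) \to 0$, and note $d(x, a) \leqslant \limsup_{n} d(x, a_n) + \limsup_{n} d(a_n, a) = 0$.
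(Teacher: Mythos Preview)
Your proof is correct and follows essentially the same approach as the paper's: both handle the empty case trivially, invoke Corollary~\ref{c2.8} for \ref{t2.9:s1}~$\Rightarrow$~\ref{t2.9:s2}, and for \ref{t2.9:s2}~$\Rightarrow$~\ref{t2.9:s1} take a Cauchy sequence in $A$, pass to a limit $x$ in the complete ambient space, and split on whether $x \in A$. Your argument for \ref{t2.9:s1}~$\Rightarrow$~\ref{t2.9:s2} is in fact slightly cleaner than the paper's, which takes a small detour through showing $[x]_0 \subseteq \overline{A}$ via Proposition~\ref{p2.7} before applying Corollary~\ref{c2.8}; you simply observe that $x \in \overline{A} = \bigcup_{a \in A}[a]_0$ already gives $x \in [a]_0$ for some $a \in A$, hence $a \in [x]_0 \cap A$.
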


\begin{proof}
Since the boundary of the empty set \(\varnothing\) is empty and \(\varnothing\) is complete in any pseudometric space, the equivalence \(\ref{t2.9:s1} \Leftrightarrow \ref{t2.9:s2}\) is valid for \(A = \varnothing\). Let us consider the case when \(A\) is nonempty.

\(\ref{t2.9:s1} \Rightarrow \ref{t2.9:s2}\). Let \(A \subseteq X\) be complete in \((X, d)\), and let \(x\) be a point of \(\Fr(A)\). The formula
\begin{equation}\label{t2.9:e1}
\Fr(A) = \ol{A} \cap \ol{X \setminus A}
\end{equation}
implies that
\begin{equation}\label{t2.9:e2}
x \in \ol{A} \quad \text{and} \quad x \in \ol{X \setminus A}.
\end{equation}
The singleton \(\{x\}\) is a complete subset of \((X, d)\). Hence, by Proposition~\ref{p2.7}, the set \([x]_0\) is closed in \((X, d)\). Consequently, the closure of the set \(\{x\}\) is a subset of \(\ol{A}\) (because \(\ol{A} = \ol{\ol{A}}\) holds). Thus, we have the inclusion
\begin{equation}\label{t2.9:e3}
[x]_0 \subseteq \ol{A}.
\end{equation}
Since \(\ol{A} = \cup_{a \in A} [a]_0\) holds by Corollary~\ref{c2.8}, inclusion \eqref{t2.9:e3} implies
\[
[x]_0 \subseteq \cup_{a \in A} [a]_0.
\]
Thus, there is \(a \in A\) such that
\begin{equation}\label{t2.9:e4}
[x]_0 \cap [a]_0 \neq \varnothing.
\end{equation}
By Proposition~\ref{ch2:p1}, for any two different \(p\), \(q \in X\), we have either \([p]_0 = [q]_0\) or \([p]_0 \cap [q]_0 = \varnothing\). Hence, \eqref{t2.9:e4} implies that \([x]_0 = [a]_0\) holds. Thus, the point \(a \in A\) belongs to intersection \([x]_0 \cap A\).

\(\ref{t2.9:s2} \Rightarrow \ref{t2.9:s1}\). Let \ref{t2.9:s2} hold. We must prove that \(A\) is a complete subset of \((X, d)\). Let us consider an arbitrary Cauchy sequence \((a_n)_{n \in \NN} \subseteq A\). Since \(A\) is a subset of \(X\) and \((X, d)\) is complete, there is a point \(b \in X\) such that
\begin{equation}\label{t2.9:e5}
\lim_{n \to \infty} d(a_n, b) = 0.
\end{equation}
It suffices to show that there is a point \(a\) of the set \(A\) such that \(a \in [b]_0\). The last statement is evidently holds if \(b \in A\).

Suppose that \(b\) belongs to \(X \setminus A\). Then we have \(b \in \ol{X \setminus A}\) and, in addition, from \eqref{t2.9:e5} it follows that \(b \in \ol{A}\). Hence, \(b\) belongs to \(\Fr(A)\) by \eqref{t2.9:e1}. Now using \ref{t2.9:s2} we can find a point \(a\) of \(A\) such that \(a \in [b]_0\).
\end{proof}

\begin{remark}\label{r2.11}
It is easy to see that Theorem~\ref{t2.9} implies Theorem~\ref{t1.11}. Indeed, if \((X, d)\) is a metric space, then statement \ref{t2.9:s2} of Theorem~\ref{t2.9} holds iff we have \(\Fr(A) \subseteq A\) but the last inclusion means that \(A\) is closed in \((X, d)\).
\end{remark}

\begin{lemma}\label{l2.6}
Every closed subset of each complete pseudometric space is complete.
\end{lemma}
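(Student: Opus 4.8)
The plan is to reduce the statement to the definition of completeness together with the description of closedness in the topology generated by $d$. Let $(X,d)$ be a complete pseudometric space and let $A$ be a closed subset of $X$. Since $\varnothing$ is complete in every pseudometric space, I may assume $A\neq\varnothing$. I would take an arbitrary Cauchy sequence $(a_n)_{n\in\NN}\subseteq A$; because $A\subseteq X$ and $(X,d)$ is complete, this sequence converges in $(X,d)$ to some $b\in X$, i.e. $\lim_{n\to\infty}d(a_n,b)=0$. It then suffices to prove that $b\in A$, for then $(a_n)_{n\in\NN}$ converges to a point of $A$ and $A$ is complete by Definition~\ref{d2.6}.

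To obtain $b\in A$, I would show that $b$ lies in the closure of $A$. Fix $r>0$. From $\lim_{n\to\infty}d(a_n,b)=0$ there is $n\in\NN$ with $d(b,a_n)<r$, that is, $a_n\in B_r(b)$, whence $B_r(b)\cap A\neq\varnothing$. Since the family $\mbf{B}_X$ of open balls is a base of the topology generated by $d$ (Definition~\ref{d2.4}), every open set containing $b$ contains some ball $B_r(b)$ and therefore meets $A$; hence $b\in\ol{A}$. As $A$ is closed, $\ol{A}=A$, so $b\in A$, which completes the argument.

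I do not expect a genuine obstacle here; the only point deserving a word of care is the passage from the pseudometric statement $\lim_{n}d(a_n,b)=0$ to the topological statement $b\in\ol{A}$, which is immediate once one recalls how $\tau$ is generated by open balls. As an alternative one-line proof, I would note that the lemma also follows at once from Theorem~\ref{t2.9}: closedness of $A$ gives $\ol{A}=A$, so by~\eqref{t2.9:e1} we have $\Fr(A)\subseteq\ol{A}=A$, and thus $x\in[x]_0\cap A$ for every $x\in\Fr(A)$; condition~\ref{t2.9:s2} of Theorem~\ref{t2.9} therefore holds, and $A$ is complete by the implication \ref{t2.9:s2}$\Rightarrow$\ref{t2.9:s1}. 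I would likely present the direct argument above and record this deduction from Theorem~\ref{t2.9} as a remark.
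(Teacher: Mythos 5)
Your direct argument is correct, but note that the paper does not actually prove this lemma: it simply refers the reader to Theorem~22 on page~192 of \cite{Kelley1975}. What you have written is, in essence, the standard proof of that cited fact, and it is sound: a Cauchy sequence $(a_n)_{n\in\NN}\subseteq A$ converges to some $b\in X$ by completeness of $X$, the convergence forces $b\in\ol{A}$, and closedness gives $b\in A$, which is exactly what Definition~\ref{d2.6} requires. The one step you rightly flag as ``deserving a word'' is the passage from $\lim_n d(a_n,b)=0$ to $b\in\ol{A}$: since $\mbf{B}_X$ is only a base, an open set $U\ni b$ a priori contains a ball $B_s(c)$ with $b\in B_s(c)\subseteq U$ centered at some other point $c$, and one needs the triangle inequality to re-center, i.e.\ to get $B_r(b)\subseteq B_s(c)$ with $r=s-d(c,b)>0$; this is routine but worth the extra line, as it is the only place the pseudometric structure (rather than a bare topology) is used. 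Your alternative deduction from Theorem~\ref{t2.9} is also valid and non-circular: the implication \ref{t2.9:s2}$\Rightarrow$\ref{t2.9:s1} there is proved without appeal to Lemma~\ref{l2.6}, and closedness of $A$ gives $\Fr(A)\subseteq\ol{A}=A$, hence $x\in[x]_0\cap A$ for every boundary point $x$. Either version would serve as a self-contained replacement for the paper's external citation.
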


For the proof see Theorem~22 on page~192 of \cite{Kelley1975}.

The following result gives us a dual form of Theorem~\ref{t2.9}.

\begin{theorem}\label{t2.10}
Let \((X, d)\) be a complete pseudometric space. Then the following statements are equivalent for every nonempty \(A \subseteq X\):
\begin{enumerate}
\item \label{t2.10:s1} \(A\) is a closed subset of \((X, d)\).
\item \label{t2.10:s2} The set \(A\) is a complete subset of \((X, d)\) and the equality
\begin{equation}\label{t2.10:e1}
A = \bigcup_{a \in A} [a]_0
\end{equation}
holds.
\end{enumerate}
\end{theorem}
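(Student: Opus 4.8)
The plan is to obtain Theorem~\ref{t2.10} as a direct repackaging of the results already in hand, so I would not expect any genuine obstacle here --- the work has been done in Lemmas~\ref{l2.2}, \ref{l2.6} and Proposition~\ref{p2.7}.

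For the implication \(\ref{t2.10:s1} \Rightarrow \ref{t2.10:s2}\) I would argue as follows. Assume \(A\) is a nonempty closed subset of the complete pseudometric space \((X, d)\). Then Lemma~\ref{l2.6} immediately gives that \(A\) is a complete subset of \((X, d)\), and Lemma~\ref{l2.2} gives the equality \eqref{t2.10:e1}. This settles \ref{t2.10:s2}.

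For the converse \(\ref{t2.10:s2} \Rightarrow \ref{t2.10:s1}\), the point is to recast \eqref{t2.10:e1} into the exact hypothesis needed to invoke Proposition~\ref{p2.7}. Observe that the set equality \(A = \bigcup_{a \in A} [a]_0\) is equivalent to the assertion that \([a]_0 \subseteq A\) holds for every \(a \in A\): indeed, since \(a \in [a]_0\), the inclusion \(\bigcup_{a \in A} [a]_0 \subseteq A\) already forces equality, and conversely if the equality holds then for each fixed \(a \in A\) we have \([a]_0 \subseteq \bigcup_{a' \in A} [a']_0 = A\). Now \(A\) is also assumed to be a complete subset of \((X, d)\), so Proposition~\ref{p2.7} applies and yields that \(A\) is closed in \((X, d)\), which is \ref{t2.10:s1}.

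The only step worth spelling out carefully is this elementary equivalence between the set equality \eqref{t2.10:e1} and the family of inclusions \([a]_0 \subseteq A\), since it is precisely what bridges the hypothesis of the theorem and the hypothesis of Proposition~\ref{p2.7}; everything else is an appeal to earlier results. One could additionally remark, in parallel with Remark~\ref{r2.11}, that in the metric case condition \ref{t2.10:s2} collapses to ``\(A\) complete'', recovering Theorem~\ref{t1.11} once more.
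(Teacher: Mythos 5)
Your proof is correct and takes essentially the same approach as the paper: the forward implication is identical (Lemma~\ref{l2.6} for completeness, Lemma~\ref{l2.2} for equality~\eqref{t2.10:e1}). For the converse you translate \eqref{t2.10:e1} into the inclusions \([a]_0 \subseteq A\) and apply Proposition~\ref{p2.7} directly, while the paper instead cites Corollary~\ref{c2.8} to get \(\overline{A} = \bigcup_{a \in A} [a]_0\) and hence \(A = \overline{A}\); since that corollary is itself derived from Proposition~\ref{p2.7}, the two arguments are the same in substance, and yours is if anything slightly more direct.
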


\begin{proof}
\(\ref{t2.10:s1} \Rightarrow \ref{t2.10:s2}\). Let \(\ref{t2.10:s1}\) hold. Then equality \eqref{t2.10:e1} follows from Lemma~\ref{l2.2}. Moreover, \(A\) is complete by Lemma~\ref{l2.6}.

\(\ref{t2.10:s2} \Rightarrow \ref{t2.10:s1}\). Let \(\ref{t2.10:s2}\) hold. Then, by Corollary~\ref{c2.8}, the equality
\[
\ol{A} = \bigcup_{a \in A} [a]_0
\]
holds. The last equality and \eqref{t2.10:e1} imply \(A = \ol{A}\). Thus, \(A\) is closed.
\end{proof}

Let us now turn to a pseudometric analogue of Theorem~\ref{t1.10}.

\begin{theorem}\label{t2.13}
Let \((Y, \rho)\) be a nonempty pseudometric space. Then the following statements are equivalent:
\begin{enumerate}
\item \label{t2.13:s1} \((Y, \rho)\) is complete.
\item \label{t2.13:s2} \(Y\) is a closed subset of every \((X, d) \in \CEC(Y, \rho)\).
\end{enumerate}
\end{theorem}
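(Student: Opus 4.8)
The plan is to establish the two implications separately, the second one by contraposition, using Proposition~\ref{p2.7} for the easy direction and an explicit one-point extension for the hard one.

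For $\ref{t2.13:s1} \Rightarrow \ref{t2.13:s2}$ I would fix an arbitrary $(X, d) \in \CEC(Y, \rho)$. Since $d|_{Y^2} = \rho$ and $(Y, \rho)$ is complete, Definition~\ref{d2.6} shows that $Y$ is a complete subset of $(X, d)$, and $Y$ is nonempty by hypothesis; hence Proposition~\ref{p2.7} applies and reduces the closedness of $Y$ in $(X, d)$ to verifying the inclusion $[y]_0 \subseteq Y$ for every $y \in Y$, where $[y]_0 = \{x \in X \colon d(x, y) = 0\}$. But this inclusion is immediate from Definition~\ref{d2.12}: if $x \in X \setminus Y$ and $y \in Y$, then $d(x, y) > 0$, so $x \notin [y]_0$. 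Thus $Y$ is closed in $(X, d)$.

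For $\ref{t2.13:s2} \Rightarrow \ref{t2.13:s1}$ I would argue by contraposition: assuming $(Y, \rho)$ is not complete, I would build some $(X, d) \in \CEC(Y, \rho)$ in which $Y$ is not closed. Choose a Cauchy sequence $(y_n)_{n \in \NN} \subseteq Y$ converging to no point of $Y$, choose a point $\omega \notin Y$, set $X := Y \cup \{\omega\}$, and extend $\rho$ to a symmetric function $d \colon X^2 \to \RR$ by $d|_{Y^2} = \rho$, $d(\omega, \omega) = 0$, and
\[
d(\omega, y) := \lim_{n \to \infty} \rho(y_n, y), \qquad y \in Y .
\]
The limit exists because $|\rho(y_n, y) - \rho(y_m, y)| \leqslant \rho(y_n, y_m)$ shows that $(\rho(y_n, y))_{n \in \NN}$ is a Cauchy sequence of real numbers. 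A short case analysis using the triangle inequality for $\rho$ and passing to the limit in $n$ shows that $d$ satisfies the triangle inequality, so $(X, d)$ is a pseudometric superspace of $(Y, \rho)$. Since $(y_n)_{n \in \NN}$ converges to no point of $Y$, for each $y \in Y$ the nonnegative number $d(\omega, y) = \lim_n \rho(y_n, y)$ is nonzero, hence positive, so $(X, d) \in \CEC(Y, \rho)$ by Definition~\ref{d2.12}. Finally $d(y_n, \omega) = \lim_m \rho(y_m, y_n) \to 0$ as $n \to \infty$ because $(y_m)_{m \in \NN}$ is a Cauchy sequence, so $(y_n)_{n \in \NN} \subseteq Y$ converges to $\omega$ in $(X, d)$; thus $\omega \in \overline{Y} \setminus Y$ and $Y$ is not closed in $(X, d)$.

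The only genuinely delicate point is the positivity of $d(\omega, \cdot)$ on $Y$: this is exactly what promotes the one-point extension from an arbitrary pseudometric superspace to a member of $\CEC(Y, \rho)$, and it is the place where the failure of completeness --- read as ``no point of $Y$ is a limit of $(y_n)_{n \in \NN}$'' --- is used. The verification of the triangle inequality and of the convergence $y_n \to \omega$ is routine bookkeeping, entirely parallel to the one-point step in the usual construction of a metric completion.
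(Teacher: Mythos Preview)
Your proof of \(\ref{t2.13:s1} \Rightarrow \ref{t2.13:s2}\) is exactly the paper's: invoke Proposition~\ref{p2.7} and read the required inclusion \([y]_0 \subseteq Y\) off Definition~\ref{d2.12}.

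For \(\ref{t2.13:s2} \Rightarrow \ref{t2.13:s1}\) your argument is correct but genuinely different from the paper's. The paper does not argue by contraposition; instead it builds, for an arbitrary \((Y,\rho)\), a \emph{complete} superspace \((X,d)\in\CEC(Y,\rho)\) by passing to the metric reflection \((Y/{\coherent{0}},\delta_\rho)\), taking its metric completion \((Y^*,\rho^*)\), and gluing the ``new'' points of \(Y^*\) back onto \(Y\). One then simply quotes Lemma~\ref{l2.6} (closed subsets of complete spaces are complete). Your route is more elementary: a single-point extension \(X=Y\cup\{\omega\}\) with \(d(\omega,y)=\lim_n\rho(y_n,y)\), needing only the real-valued Cauchy criterion, the triangle inequality in the limit, and the observation that non-convergence of a Cauchy sequence to \(y\) forces \(\lim_n\rho(y_n,y)>0\). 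The trade-off: the paper's construction is heavier but has the dividend that the space \((X,d)\) it builds is in fact a pseudometric completion of \((Y,\rho)\), as noted in the remark following the theorem; your one-point extension is quicker and self-contained but gives no such extra information.
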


\begin{proof}
\(\ref{t2.13:s1} \Rightarrow \ref{t2.13:s2}\). Let \((Y, \rho)\) be a complete pseudometric space and let \((X, d) \in \CEC(Y, \rho)\). We must show that \(Y\) is closed in \((X, d)\).

By Proposition~\ref{p2.7}, the set \(Y\) is closed in \((X, d)\) iff the inclusion
\begin{equation}\label{t2.13:e1}
\{x \in X \colon d(x, y) = 0\} \subseteq Y
\end{equation}
holds for every \(y \in Y\). Since \((X, d)\) belongs to \(\CEC(Y, \rho)\), we have the inequality \(d(x, y) > 0\) for all \(y \in Y\) and \(x \in X \setminus Y\). Hence, \eqref{t2.13:e1} is equivalent to the trivially valid inclusion
\[
\{x \in Y \colon \rho(x, y) = 0\} \subseteq Y.
\]

\(\ref{t2.13:s2} \Rightarrow \ref{t2.13:s1}\). Let \ref{t2.13:s2} hold. Then, to prove the validity of \ref{t2.13:s1}, it suffices to find a complete \((X, d) \in \CEC(Y, \rho)\) and to use Lemma~\ref{l2.6}.

To construct a complete \((X, d) \in \CEC(Y, \rho)\), we consider the metric reflection \((Y/ \coherent{0}, \delta_{\rho})\) of the pseudometric space \((Y, \rho)\) and the (metric) completion \((Y^*, \rho^*)\) of \((Y/\coherent{0}, \delta_{\rho})\) such that \((Y^*, \rho^*)\) is a (metric) superspace of \((Y/\coherent{0}, \delta_{\rho})\). Without loss of generality, we can assume that if \(y^*\) is a point of \(Y^*\) such that \(y^* \notin Y/\coherent{0}\), then \(y^* \notin Y\) also holds. As in Example~\ref{ex2.5}, we consider the mapping \(\pi_Y \colon Y \to Y/\coherent{0}\) by
\[
\pi_Y(y) = \{x \in Y \colon \rho(x, y) = 0\}, \quad y \in Y.
\]

Let us define a set \(X\) and a pseudometric \(d\) on \(X\) by the rules:
\begin{gather}
\label{t2.13:e2}
(x \in X) \Leftrightarrow \left(x \in Y \cup Y^* \text{ and } x \notin Y/\coherent{0}\right),\\
\label{t2.13:e3}
d(x_1, x_2) = \begin{cases}
\rho^*(\pi_Y(x_1), \pi_Y(x_2)) & \text{if } x_1, x_2 \in Y,\\
\rho^*(\pi_Y(x_1), x_2) & \text{if } x_1 \in Y \text{ and } x_2 \in X \setminus Y,\\
\rho^*(x_1, \pi_Y(x_2)) & \text{if } x_1\in X \setminus Y \text{ and } x_2 \in Y,\\
\rho^*(x_1, x_2) & \text{if } x_1, x_2 \in X \setminus Y.
\end{cases}
\end{gather}
Then \((X, d)\) is complete and belongs to \(\CEC(Y, \rho)\).
\end{proof}

\begin{remark}\label{r2.14}
If \((Y, \rho)\) is a metric space, then \(\CEC(Y, \rho)\) includes the class of all metric superspaces of \((Y, \rho)\). Consequently, Theorem~\ref{t2.13} can be considered as a generalization of Theorem~\ref{t1.10}.
\end{remark}

\begin{remark}
Each pseudometric space \((Y, \rho)\) can be mapped by one-to-one pseudoisometry onto a dense subset of a complete pseudometric space \((X, d)\) (see Theorem~27 on page~196 of \cite{Kelley1975}). If we define \((X, d)\) by rules \eqref{t2.13:e2}--\eqref{t2.13:e3}, then \((Y, \rho)\) is a dense subset of \((X, d)\) and \((X, d)\) can be considered as a pseudometric completion of \((Y, \rho)\).
\end{remark}

\begin{proposition}\label{p2.15}
Let \((Y, \rho)\) be a complete pseudometric space and let \(\CEC^*(Y, \rho)\) be a class of pseudometric superspaces of \((Y, \rho)\) such that \(Y\) is closed in every \((X, d) \in \CEC^*(Y, \rho)\). Then the inclusion
\[
\CEC^*(Y, \rho) \subseteq \CEC(Y, \rho)
\]
holds.
\end{proposition}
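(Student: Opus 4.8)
The plan is to show directly that every pseudometric superspace $(X, d)$ of $(Y, \rho)$ in which $Y$ is closed already satisfies the defining condition of $\CEC(Y, \rho)$ from Definition~\ref{d2.12}; this immediately gives the desired inclusion, since $\CEC^*(Y, \rho)$ consists precisely of such superspaces by hypothesis. So I would fix an arbitrary $(X, d) \in \CEC^*(Y, \rho)$ and aim to prove that $d(x, y) > 0$ holds whenever $y \in Y$ and $x \in X \setminus Y$.

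First I would dispose of the trivial case $Y = \varnothing$: here the condition in Definition~\ref{d2.12} is vacuous, so $\CEC(Y, \rho)$ is the class of all pseudometric superspaces of $(Y, \rho)$ and the inclusion holds. Assuming now $Y \neq \varnothing$, I would use that $Y$ is a nonempty closed subset of $(X, d)$ to apply Lemma~\ref{l2.2} with $A = Y$, obtaining
\[
Y = \bigcup_{y \in Y} [y]_0,
\]
where $[y]_0 = \{x \in X \colon d(x, y) = 0\}$; in particular $[y]_0 \subseteq Y$ for every $y \in Y$. Note that the completeness of $(Y, \rho)$ is not actually needed for this step; alternatively one could invoke Proposition~\ref{p2.7}, using that $Y$ is a complete subset of $(X, d)$ because $d|_{Y^2} = \rho$.

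Finally, I would argue by contradiction: if there were $y \in Y$ and $x \in X \setminus Y$ with $d(x, y) = 0$, then $x \in [y]_0 \subseteq Y$, contradicting $x \in X \setminus Y$. Hence $d(x, y) > 0$ for all $y \in Y$ and $x \in X \setminus Y$, which is exactly the requirement in Definition~\ref{d2.12}, so $(X, d) \in \CEC(Y, \rho)$; as $(X, d) \in \CEC^*(Y, \rho)$ was arbitrary, $\CEC^*(Y, \rho) \subseteq \CEC(Y, \rho)$ follows. The argument is essentially immediate once Lemma~\ref{l2.2} is available, so there is no real obstacle here; the only point that requires a moment's care is the nonemptiness hypothesis of Lemma~\ref{l2.2}, which is precisely why the case $Y = \varnothing$ is treated separately.
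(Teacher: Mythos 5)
Your proof is correct and takes essentially the same approach as the paper's: both arguments reduce to the fact that closedness of \(Y\) in \((X, d)\) forces \([y]_0 \subseteq Y\) for every \(y \in Y\), which is precisely the defining condition of \(\CEC(Y, \rho)\); the paper merely argues contrapositively and cites Proposition~\ref{p2.7} (whose relevant direction is itself Lemma~\ref{l2.2}) where you invoke Lemma~\ref{l2.2} directly. Your side remark that the completeness of \((Y, \rho)\) is not actually needed for this inclusion is also correct.
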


\begin{proof}
Let \((X, d)\) be a pseudometric space such that \((X, d) \in \CEC^*(Y, \rho)\) and \((X, d) \notin \CEC(Y, \rho)\). Then there are \(x \in X \setminus Y\) and \(y \in Y\) satisfying the equality \(d(x, y) = 0\). Hence, by Proposition~\ref{p2.7}, the set \(Y\) is not a closed subset of \((X, d)\), contrary to the definition of \(\CEC^*(Y, \rho)\).
\end{proof}

\section{Metric reflections and pseudoisometries}

The following properties of pseudoisometries are easy to prove.

\begin{proposition}\label{p2.5}
Let \((X, d)\) and \((Y, \rho)\) be pseudometric spaces. Then the following statements hold for every pseudoisometry \(\Phi \colon X \to Y\):
\begin{enumerate}
\item \label{p2.5:s1} If \((X, d)\) and \((Y, \rho)\) are metric spaces, then \(\Phi\) is an isometry.
\item \label{p2.5:s2} If \((Y, \rho)\) is a metric space, then \(\Phi\) is a surjection.
\item \label{p2.5:s3} If \((X, d)\) is a metric space, then \(\Phi\) is an injection.
\item \label{p2.5:s4} \(\Phi\) is a homeomorphism of the topological spaces \(X\) and \(Y\) (endowed with topologies generated by \(d\) and \(\rho\), respectively) if and only if \(\Phi\) is bijective.
\end{enumerate}
\end{proposition}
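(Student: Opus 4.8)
The plan is to obtain all four statements by unwinding Definition~\ref{d2.5}, handling statements~\ref{p2.5:s2} and~\ref{p2.5:s3} first, since \ref{p2.5:s1} is their conjunction and \ref{p2.5:s4} rests on the same computations.

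First I would prove \ref{p2.5:s2}: given $u \in Y$, condition~\ref{d2.11:s2} of Definition~\ref{d2.5} furnishes $v \in X$ with $\rho(\Phi(v), u) = 0$, and since $\rho$ is a metric the positivity property forces $\Phi(v) = u$; hence $\Phi$ is surjective. For \ref{p2.5:s3}, if $\Phi(x) = \Phi(x')$ then condition~\ref{d2.11:s1} gives $d(x, x') = \rho(\Phi(x), \Phi(x')) = 0$, so $x = x'$ because $d$ is a metric; hence $\Phi$ is injective. Statement~\ref{p2.5:s1} then follows immediately: when both spaces are metric, $\Phi$ is a distance-preserving bijection, i.e.\ an isometry in the classical sense.

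For \ref{p2.5:s4}, the ``only if'' direction is trivial, a homeomorphism being by definition a bijection of the underlying sets. For the converse, assume $\Phi$ bijective. Condition~\ref{d2.11:s1} yields directly $\Phi^{-1}(B_r(\Phi(c))) = B_r(c)$ for all $c \in X$, $r > 0$, and, combined with surjectivity, $\Phi(B_r(c)) = B_r(\Phi(c))$; since every $b \in Y$ has the form $\Phi(c)$, these identities say that $\Phi$ carries the open base $\mbf{B}_X$ onto the open base $\mbf{B}_Y$ and conversely. As $\Phi^{-1}$ is again a distance-preserving bijection, the same computation applies to it; and since the open balls form a base for each topology by Definition~\ref{d2.4} while images and preimages under a map commute with arbitrary unions, both $\Phi$ and $\Phi^{-1}$ are continuous, so $\Phi$ is a homeomorphism.

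The one place needing care — the nearest thing to an obstacle — is in \ref{p2.5:s4}: because a pseudometric topology need not be $T_1$, ``bijective'' must be read as a condition on the underlying sets rather than something extracted from topological regularity, and one must invoke surjectivity, not condition~\ref{d2.11:s1} alone, to get the inclusion $B_r(\Phi(c)) \subseteq \Phi(B_r(c))$. With that noted, the whole proposition is a routine verification from the axioms.
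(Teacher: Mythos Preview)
Your argument is correct and is precisely the routine verification from Definition~\ref{d2.5} that the paper has in mind; the paper itself omits the proof, simply remarking that the properties ``are easy to prove''. Your treatment of \ref{p2.5:s4}, including the explicit use of surjectivity to obtain \(B_r(\Phi(c)) \subseteq \Phi(B_r(c))\), is the natural way to fill in the details.
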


\begin{lemma}\label{l2.8}
Let \((X, d)\), \((Y, \rho)\) and \((Z, \delta)\) be pseudometric spaces and let \(\Phi \colon X \to Y\) and \(\Psi \colon Y \to Z\) be pseudoisometries. Then the mapping
\[
X \xrightarrow{\Phi} Y \xrightarrow{\Psi} Z
\]
is also a pseudoisometry.
\end{lemma}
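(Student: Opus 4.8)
The plan is to verify the two defining conditions of a pseudoisometry (Definition~\ref{d2.5}) for the composite map $\Psi \circ \Phi \colon X \to Z$. Write $\Theta = \Psi \circ \Phi$, so $\Theta(x) = \Psi(\Phi(x))$ for every $x \in X$.

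First I would check the distance-preserving condition \ref{d2.11:s1}. For arbitrary $x$, $y \in X$, applying property \ref{d2.11:s1} of $\Psi$ to the points $\Phi(x)$, $\Phi(y) \in Y$ gives $\delta(\Psi(\Phi(x)), \Psi(\Phi(y))) = \rho(\Phi(x), \Phi(y))$, and then property \ref{d2.11:s1} of $\Phi$ gives $\rho(\Phi(x), \Phi(y)) = d(x, y)$. Chaining these equalities yields $\delta(\Theta(x), \Theta(y)) = d(x, y)$, which is exactly condition \ref{d2.11:s1} for $\Theta$.

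Next I would check the ``almost surjectivity'' condition \ref{d2.11:s2}. Let $w \in Z$ be arbitrary. Since $\Psi$ is a pseudoisometry, there is $u \in Y$ with $\delta(\Psi(u), w) = 0$. Since $\Phi$ is a pseudoisometry, there is $v \in X$ with $\rho(\Phi(v), u) = 0$. Now I would estimate $\delta(\Theta(v), w) = \delta(\Psi(\Phi(v)), w)$ using the triangle inequality through the point $\Psi(u)$: we get $\delta(\Psi(\Phi(v)), w) \leqslant \delta(\Psi(\Phi(v)), \Psi(u)) + \delta(\Psi(u), w)$. The second summand is $0$ by the choice of $u$. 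For the first summand, property \ref{d2.11:s1} of $\Psi$ gives $\delta(\Psi(\Phi(v)), \Psi(u)) = \rho(\Phi(v), u) = 0$ by the choice of $v$. Hence $\delta(\Theta(v), w) \leqslant 0$, and since $\delta$ is nonnegative, $\delta(\Theta(v), w) = 0$. This establishes condition \ref{d2.11:s2} for $\Theta$, so $\Theta$ is a pseudoisometry.

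There is no real obstacle here; the only point requiring a moment's care is condition \ref{d2.11:s2}, where one cannot simply compose the two ``lifts'' $v \mapsto u \mapsto w$ and expect an exact equality at zero distance — the pre-images are only determined up to zero distance — so the triangle inequality together with the distance-preserving property of $\Psi$ is needed to push the two vanishing contributions together. Everything else is a direct substitution.
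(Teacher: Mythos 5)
Your proof is correct and is exactly the verification that the paper leaves implicit: the paper's proof of Lemma~\ref{l2.8} consists of the single sentence that it follows directly from Definition~\ref{d2.5}. Your careful handling of condition \ref{d2.11:s2} via the triangle inequality and the distance-preserving property of $\Psi$ is the right way to fill in the one nontrivial detail.
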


\begin{proof}
It follows directly from Definition~\ref{d2.5}.
\end{proof}

\begin{theorem}\label{t3.3}
Let \((X, d)\) and \((Y, \rho)\) be pseudometric spaces. Then \((X, d)\) and \((Y, \rho)\) are pseudoisometric if and only if the metric reflections \((X / \coherent{0}, \delta_d)\) and \((Y / \coherent{0}, \delta_\rho)\) are isometric metric spaces.
\end{theorem}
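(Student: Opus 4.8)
The plan is to prove the two implications separately, in each case reducing everything to the auxiliary maps supplied by Example~\ref{ex2.5}, the composition Lemma~\ref{l2.8}, and Proposition~\ref{p2.5}; no new idea beyond these is needed.

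\emph{From a pseudoisometry to an isometry of the reflections.} Suppose $\Phi\colon X\to Y$ is a pseudoisometry. First I would define $\widehat\Phi\colon X/{\coherent{0}}\to Y/{\coherent{0}}$ by $\widehat\Phi([x]_0)=[\Phi(x)]_0$ and verify three things. It is well defined: if $[x_1]_0=[x_2]_0$, then $d(x_1,x_2)=0$, hence $\rho(\Phi(x_1),\Phi(x_2))=d(x_1,x_2)=0$ by Definition~\ref{d2.5}\,\ref{d2.11:s1}, so $[\Phi(x_1)]_0=[\Phi(x_2)]_0$. It is distance preserving: for all $x_1,x_2\in X$,
\[
\delta_{\rho}\bigl(\widehat\Phi([x_1]_0),\widehat\Phi([x_2]_0)\bigr)=\rho(\Phi(x_1),\Phi(x_2))=d(x_1,x_2)=\delta_d([x_1]_0,[x_2]_0),
\]
where the two outer equalities are just the definition~\eqref{e1.1.5} of the reflection metrics. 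It is surjective: given $\beta\in Y/{\coherent{0}}$, pick any $u\in\beta$ and then, by Definition~\ref{d2.5}\,\ref{d2.11:s2}, choose $v\in X$ with $\rho(\Phi(v),u)=0$; then $\widehat\Phi([v]_0)=[\Phi(v)]_0=[u]_0=\beta$. Since a distance preserving map between metric spaces is automatically injective, $\widehat\Phi$ is an isometry. (A shorter route, if one prefers to avoid the explicit formula, is to observe that $\pi_Y\circ\Phi\circ\Psi_X$ is a pseudoisometry of the two metric reflections by Lemma~\ref{l2.8} — with $\pi_Y$ and $\Psi_X$ as in Example~\ref{ex2.5} applied to $(Y,\rho)$ and $(X,d)$ — and then invoke Proposition~\ref{p2.5}\,\ref{p2.5:s1}.)

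\emph{From an isometry of the reflections to a pseudoisometry.} Conversely, suppose $g\colon X/{\coherent{0}}\to Y/{\coherent{0}}$ is an isometry of the metric reflections. Being a distance preserving surjection, $g$ is in particular a pseudoisometry. By Example~\ref{ex2.5} the canonical projection $\pi_X\colon X\to X/{\coherent{0}}$ is a pseudoisometry, and the Axiom of Choice yields a pseudoisometry $\Psi\colon Y/{\coherent{0}}\to Y$. Then Lemma~\ref{l2.8} shows that the composite
\[
X\xrightarrow{\ \pi_X\ }X/{\coherent{0}}\xrightarrow{\ g\ }Y/{\coherent{0}}\xrightarrow{\ \Psi\ }Y
\]
is a pseudoisometry of $(X,d)$ and $(Y,\rho)$, which is what is wanted.

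I do not expect a genuine obstacle: once Example~\ref{ex2.5}, Lemma~\ref{l2.8} and Proposition~\ref{p2.5} are in hand the argument is essentially bookkeeping. The two points that deserve a moment of care are (a) making sure the reflection metrics $\delta_d$ and $\delta_{\rho}$ are evaluated in accordance with~\eqref{e1.1.5}, so that $\widehat\Phi$ is genuinely distance preserving, and (b) noting that the reverse implication uses the Axiom of Choice (through the map $\Psi$ of Example~\ref{ex2.5}), whereas the direct construction of $\widehat\Phi$ in the forward implication is choice-free.
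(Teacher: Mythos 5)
Your proposal is correct and follows essentially the same route as the paper: both directions construct the induced map on the quotient sets (the paper's $F$ with $F\circ\pi_X=\pi_Y\circ\Phi$ is exactly your $\widehat\Phi$), verify well-definedness and distance preservation from \eqref{e1.1.5} and Definition~\ref{d2.5}\,\ref{d2.11:s1}, and obtain the converse by composing $\pi_X$, the isometry, and $\Psi$ via Example~\ref{ex2.5} and Lemma~\ref{l2.8}. The only cosmetic difference is that you derive surjectivity of $\widehat\Phi$ directly from Definition~\ref{d2.5}\,\ref{d2.11:s2}, whereas the paper routes it through Proposition~\ref{p2.5}\,\ref{p2.5:s2} applied to $\pi_Y\circ\Phi$.
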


\begin{proof}
Suppose that \((X, d)\) and \((Y, \rho)\) are pseudoisometric. We will prove that \((X / \coherent{0}, \delta_d)\) and \((Y / \coherent{0}, \delta_{\rho})\) are isometric.

Write \(\Phi \colon X \to Y\) for a pseudoisometry of \((X, d)\) and \((Y, \rho)\). Let \(\pi_X \colon X \to X / \coherent{0}\) and \(\pi_Y \colon Y \to Y / \coherent{0}\) be the natural projection of \(X\) and \(Y\) on the quotient sets \(X / \coherent{0}\) and \(Y / \coherent{0}\), respectively, i.e.,
\begin{align}\label{t3.3:e1}
\pi_X (x) &= \{z \in X \colon d(x, z) = 0\},\\
\label{t3.3:e2}
\pi_Y (y) &= \{t \in Y \colon \rho(t, y) = 0\}
\end{align}
hold for all \(x \in X\) and \(y \in Y\).

We claim that there is a mapping \(F \colon X / \coherent{0} \to Y / \coherent{0}\) such that the diagram
\begin{equation}\label{t3.3:e3}
\ctdiagram{
\ctv 0,30: {X}
\ctv 120,30: {Y}
\ctv 0,-30: {X / \coherent{0}}
\ctv 120,-30: {Y / \coherent{0}}
\ctet 0,30,120,30:{\Phi}
\ctet 0,-30,120,-30:{F}
\ctel 0,30, 0,-30:{\pi_X}
\cter 120,30, 120,-30:{\pi_Y}
}
\end{equation}
is commutative, i.e., \(F \circ \pi_X = \pi_Y \circ \Phi\) holds. A desired \(F\) can be found if and only if the implication
\begin{equation}\label{t3.3:e4}
\bigl(\pi_X(x_1) = \pi_X(x_2)\bigr) \Rightarrow \bigl(\pi_Y(\Phi(x_1)) = \pi_Y(\Phi(x_2))\bigr)
\end{equation}
is valid for all \(x_1\), \(x_2 \in X\). Let us prove the validity of \eqref{t3.3:e4}. If we have \(\pi_X(x_1) = \pi_X(x_2)\), then \eqref{t3.3:e1} implies \(d(x_1, x_2) = 0\). The last equality and statement \ref{d2.11:s1} from Definition~\ref{d2.5} give us the equality
\begin{equation}\label{t3.3:e5}
\rho(\Phi(x_1), \Phi(x_2)) = 0.
\end{equation}
Now from \eqref{t3.3:e5} and \eqref{t3.3:e2} it follows that \(\pi_Y(\Phi(x_1)) = \pi_Y(\Phi(x_2))\). Thus, there is \(F \colon X / \coherent{0} \to Y / \coherent{0}\) such that diagram \eqref{t3.3:e3} is commutative.

Let \(F \colon X / \coherent{0} \to Y / \coherent{0}\) satisfy
\begin{equation}\label{t3.3:e10}
F \circ \pi_X = \pi_Y \circ \Phi
\end{equation}
and let \(\alpha\), \(\beta\) be arbitrary points of \(X / \coherent{0}\). Then, by \eqref{e1.1.5}, we have
\begin{equation}\label{t3.3:e6}
\delta_d (\alpha, \beta) = d(x_1, x_2)
\end{equation}
for all \(x_1 \in \alpha\) and \(x_2 \in \beta\). Using \eqref{t3.3:e1} we can rewrite \eqref{t3.3:e6} as
\begin{equation}\label{t3.3:e7}
\delta_d (\pi_X(x_1), \pi_X(x_2)) = d(x_1, x_2).
\end{equation}
The last equality and statement \ref{d2.11:s1} from Definition~\ref{d2.5} imply
\begin{equation}\label{t3.3:e8}
d(x_1, x_2) = \rho (\Phi(x_1), \Phi(x_2)).
\end{equation}
Now using Proposition~\ref{ch2:p1} with \(X = Y\) and \(d = \rho\), we obtain
\begin{equation}\label{t3.3:e9}
\rho (\Phi(x_1), \Phi(x_2)) = \delta_{\rho}(\pi_Y(\Phi(x_1)), \pi_Y(\Phi(x_2))).
\end{equation}
Consequently, we have
\begin{equation}\label{t3.3:e11}
\delta_d (\pi_X(x_1), \pi_X(x_2)) = \delta_{\rho}(\pi_Y(\Phi(x_1)), \pi_Y(\Phi(x_2))).
\end{equation}
Using \eqref{t3.3:e10} we can rewrite \eqref{t3.3:e11} as
\[
\delta_d (\pi_X(x_1), \pi_X(x_2)) = \delta_{\rho}(F(\pi_X(x_1)), F(\pi_X(x_2))).
\]
Thus, we have
\[
\delta_d (\alpha, \beta) = \delta_{\rho} (F(\alpha), F(\beta))
\]
for all \(\alpha\), \(\beta \in X / \coherent{0}\). Consequently, \(F\) is an isometric embedding of the metric space \((X/\coherent{0}, \delta_d)\) in the metric space \((Y/\coherent{0}, \delta_{\rho})\). Moreover, by Example~\ref{ex2.5}, the mapping \(\pi_Y \colon Y \to Y / \coherent{0}\) is a pseudoisometry and, consequently,
\[
X \xrightarrow{\Phi} Y \xrightarrow{\pi_Y} Y / \coherent{0}
\]
is a pseudoisometry by Lemma~\ref{l2.8}. Since \((Y/\coherent{0}, \delta_{\rho})\) is a metric space, the pseudoisometry \(X \xrightarrow{\Phi} Y \xrightarrow{\pi_Y} Y / \coherent{0}\) is surjective by Statement~\ref{p2.5:s2} of Proposition~\ref{p2.5}. Using equality~\eqref{t3.3:e10}, we obtain that \(F \circ \pi_X\) is also surjective, that implies the surjectivity of \(F\). Every surjective isometric embedding is an isometry. Thus, \(F\) is an isometry of \((X/\coherent{0}, \delta_d)\) and \((Y/\coherent{0}, \delta_{\rho})\).

Conversely, let \((X/\coherent{0}, \delta_d)\) and \((Y/\coherent{0}, \delta_{\rho})\) be isometric metric spaces. Write \(F \colon X/\coherent{0} \to Y/\coherent{0}\) for an isometry of these spaces. Let \(\pi_X \colon X \to X/\coherent{0}\) be the natural projection defined by \eqref{t3.3:e1} and let \(\Psi \colon Y/\coherent{0} \to Y\) be defined as in Example~\ref{ex2.5}. Then, using the same example and Lemma~\ref{l2.8}, we see that the mapping
\[
X \xrightarrow{\pi_X} X/\coherent{0} \xrightarrow{F} Y/\coherent{0} \xrightarrow{\Psi} Y
\]
is a pseudoisometry of \((X, d)\) and \((Y, \rho)\).
\end{proof}

Let us turn now to some corollaries of Theorem~\ref{t3.3}.

Write \(R_{\Pi}\) for the relation ``to be pseudoisometric''.

\begin{corollary}\label{c2.7}
\(R_{\Pi}\) is an equivalence relation on the class of all pseudometric spaces.
\end{corollary}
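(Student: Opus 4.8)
The plan is to derive the corollary from Theorem~\ref{t3.3} together with the elementary observation that the relation ``to be isometric'' is an equivalence relation on the class of all metric spaces. Denote the latter relation by $R_I$. Its reflexivity is witnessed by the identity maps, its transitivity by compositions of isometries, and its symmetry by inverse maps: a distance-preserving surjection $f$ between metric spaces is automatically injective (if $f(x) = f(y)$ then $d(x, y) = 0$, hence $x = y$, since a metric separates points), so $f$ is bijective, and $f^{-1}$ is again distance-preserving.

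Now Theorem~\ref{t3.3} says that $(X, d) \mathrel{R_{\Pi}} (Y, \rho)$ holds if and only if the metric reflections satisfy $(X / \coherent{0}, \delta_d) \mathrel{R_I} (Y / \coherent{0}, \delta_{\rho})$. I would then transfer each of the three properties across this equivalence. Reflexivity: $(X / \coherent{0}, \delta_d) \mathrel{R_I} (X / \coherent{0}, \delta_d)$ gives $(X, d) \mathrel{R_{\Pi}} (X, d)$. Symmetry: from $(X, d) \mathrel{R_{\Pi}} (Y, \rho)$ we get $(X / \coherent{0}, \delta_d) \mathrel{R_I} (Y / \coherent{0}, \delta_{\rho})$, hence $(Y / \coherent{0}, \delta_{\rho}) \mathrel{R_I} (X / \coherent{0}, \delta_d)$ by symmetry of $R_I$, hence $(Y, \rho) \mathrel{R_{\Pi}} (X, d)$. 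Transitivity follows in the same manner from transitivity of $R_I$. Since the three implications of Theorem~\ref{t3.3} are all that is used, there is essentially no obstacle at this stage — the substantive work has already been done in proving Theorem~\ref{t3.3}.

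It is worth noting that one could instead argue straight from Definition~\ref{d2.5}: reflexivity is clear because the identity map is a pseudoisometry, and transitivity is exactly Lemma~\ref{l2.8}. The only step of this direct route that requires care is symmetry. Given a pseudoisometry $\Phi \colon X \to Y$, one uses the Axiom of Choice (as in Example~\ref{ex2.5}) to choose, for each $u \in Y$, a point $\Psi(u) \in X$ with $\rho(\Phi(\Psi(u)), u) = 0$, and then verifies by the triangle inequality that $\Psi \colon Y \to X$ is a pseudoisometry of $(Y, \rho)$ and $(X, d)$. I expect this verification of symmetry to be the only mildly nontrivial point, which is precisely why channeling the argument through Theorem~\ref{t3.3} is the cleanest choice. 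As in the discussion preceding Proposition~\ref{p2.2}, ``equivalence relation'' is here understood in the class-theoretic sense, since $R_{\Pi}$ is a relation on a proper class rather than on a set.
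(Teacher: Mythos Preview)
Your proof is correct and takes essentially the same approach as the paper: reduce via Theorem~\ref{t3.3} to the fact that ``to be isometric'' is an equivalence relation on the class of all metric spaces. The paper's proof is simply the one-line observation of that latter fact, leaving the transfer implicit, whereas you spell out the reflexivity/symmetry/transitivity transfer explicitly and add a helpful remark on the alternative direct route through Lemma~\ref{l2.8} and the Axiom of Choice.
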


\begin{proof}
The relation ``to be isometric'' is an equivalence relation on the class of all metric spaces.
\end{proof}

Analyzing the proof of Theorem~\ref{t3.3}, we obtain the following.

\begin{corollary}\label{c2.13}
Let \(\Phi \colon X \to Y\) be a pseudoisometry of pseudometric spaces \((X, d)\) and \((Y, \rho)\). If \(F \colon X / {\coherent{0}} \to Y / {\coherent{0}}\) is a mapping such that diagram~\eqref{t3.3:e3} is commutative, then \(F\) is an isometry of the metric spaces \((X / {\coherent{0}}, \delta_d)\) and \((Y / {\coherent{0}}, \delta_{\rho})\).
\end{corollary}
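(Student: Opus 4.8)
The plan is to observe that the part of the proof of Theorem~\ref{t3.3} establishing ``$F$ is an isometry'' never uses how $F$ was constructed from $\Phi$ --- it uses only the commutativity identity $F \circ \pi_X = \pi_Y \circ \Phi$. So I would simply re-run that portion of the argument for whichever $F$ we are handed.

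First I would check that $F$ is an isometric embedding. Fix $\alpha, \beta \in X/\coherent{0}$ and pick any representatives $x_1 \in \alpha$ and $x_2 \in \beta$. Since $\delta_d$ is correctly defined (Proposition~\ref{ch2:p1}), formula~\eqref{e1.1.5} gives $\delta_d(\alpha, \beta) = d(x_1, x_2)$ irrespective of the choice of representatives; applying statement~\ref{d2.11:s1} of Definition~\ref{d2.5} and then Proposition~\ref{ch2:p1} once more (now with $X$, $d$ replaced by $Y$, $\rho$) yields
\[
\delta_d(\alpha, \beta) = d(x_1, x_2) = \rho\bigl(\Phi(x_1), \Phi(x_2)\bigr) = \delta_{\rho}\bigl(\pi_Y(\Phi(x_1)), \pi_Y(\Phi(x_2))\bigr).
\]
Commutativity of diagram~\eqref{t3.3:e3} gives $\pi_Y(\Phi(x_i)) = F(\pi_X(x_i))$ for $i = 1, 2$, while $\pi_X(x_1) = \alpha$ and $\pi_X(x_2) = \beta$, so $\delta_d(\alpha, \beta) = \delta_{\rho}(F(\alpha), F(\beta))$ for all $\alpha, \beta \in X/\coherent{0}$.

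Next I would verify surjectivity of $F$. By Example~\ref{ex2.5} the projection $\pi_Y$ is a pseudoisometry, so by Lemma~\ref{l2.8} the composition $\pi_Y \circ \Phi \colon X \to Y/\coherent{0}$ is a pseudoisometry; since its target $(Y/\coherent{0}, \delta_{\rho})$ is a metric space, statement~\ref{p2.5:s2} of Proposition~\ref{p2.5} makes $\pi_Y \circ \Phi$ surjective. By commutativity $F \circ \pi_X = \pi_Y \circ \Phi$ is then surjective, and since $\pi_X$ maps $X$ onto $X/\coherent{0}$, it follows that $F$ is surjective. A surjective isometric embedding is an isometry, which finishes the proof. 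There is no genuine obstacle here; the only point to be careful about is to recognize that everything in the proof of Theorem~\ref{t3.3} derived \emph{after} the identity $F \circ \pi_X = \pi_Y \circ \Phi$ depends on $\Phi$ and that identity alone, so it transfers unchanged to an arbitrary such $F$ --- in particular, the earlier paragraph of that proof, where a concrete $F$ was built, plays no role.
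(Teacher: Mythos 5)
Your proposal is correct and is precisely the argument the paper intends: the paper proves Corollary~\ref{c2.13} by ``analyzing the proof of Theorem~\ref{t3.3}'', and the relevant portion of that proof (the isometric-embedding computation via \eqref{e1.1.5}, statement~\ref{d2.11:s1} of Definition~\ref{d2.5}, and Proposition~\ref{ch2:p1}, followed by the surjectivity argument via Example~\ref{ex2.5}, Lemma~\ref{l2.8} and statement~\ref{p2.5:s2} of Proposition~\ref{p2.5}) uses only the identity \(F \circ \pi_X = \pi_Y \circ \Phi\), exactly as you observe.
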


Now we can simply expand Theorem~\ref{t1.14}.

\begin{theorem}\label{t3.6}
Let \((X, d)\) and \((Y, \rho)\) be pseudoisometric spaces. Then \((X, d)\) is complete if and only if \((Y, \rho)\) is complete.
\end{theorem}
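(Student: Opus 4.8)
The plan is to deduce Theorem~\ref{t3.6} directly from Theorem~\ref{t3.3} together with Theorem~\ref{t1.14}, routing the argument through the metric reflections rather than manipulating Cauchy sequences by hand. First I would dispose of the degenerate case: if one of the two spaces is empty, then condition \ref{d2.11:s2} of Definition~\ref{d2.5} forces the other to be empty as well, and since the empty pseudometric space is complete (it contains no Cauchy sequences at all), the asserted equivalence holds trivially. Hence from now on assume that both \((X, d)\) and \((Y, \rho)\) are nonempty.

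Next, since \((X, d)\) and \((Y, \rho)\) are pseudoisometric, Theorem~\ref{t3.3} gives that the metric reflections \((X / \coherent{0}, \delta_d)\) and \((Y / \coherent{0}, \delta_{\rho})\) are isometric metric spaces. An isometry of metric spaces carries Cauchy sequences to Cauchy sequences and convergent sequences to convergent sequences (with matching limits), and the same is true of its inverse; therefore one of these two metric reflections is complete if and only if the other is. In particular, \((X / \coherent{0}, \delta_d)\) is complete if and only if \((Y / \coherent{0}, \delta_{\rho})\) is complete.

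Finally, I would apply Theorem~\ref{t1.14} twice, once to each nonempty pseudometric space: \((X, d)\) is complete if and only if \((X / \coherent{0}, \delta_d)\) is complete, and \((Y, \rho)\) is complete if and only if \((Y / \coherent{0}, \delta_{\rho})\) is complete. Chaining these three equivalences — the two instances of Theorem~\ref{t1.14} and the isometry-transfer step — yields that \((X, d)\) is complete if and only if \((Y, \rho)\) is complete, which is the claim.

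I do not expect a genuine obstacle here; the statement is essentially a corollary of the preceding results. The only point requiring mild care is that Theorem~\ref{t1.14} is stated for \emph{nonempty} pseudometric spaces, which is precisely why the empty case is isolated at the start. A more self-contained alternative would be to argue directly: given a Cauchy sequence \(\seq{x_n}{n} \subseteq X\) and a pseudoisometry \(\Phi \colon X \to Y\), the sequence \(\seq{\Phi(x_n)}{n}\) is Cauchy in \((Y, \rho)\) because \(\Phi\) preserves distances, hence converges to some \(u \in Y\); then condition \ref{d2.11:s2} of Definition~\ref{d2.5} supplies \(v \in X\) with \(\rho(\Phi(v), u) = 0\), and \(d(x_n, v) = \rho(\Phi(x_n), \Phi(v)) \to 0\), so \(\seq{x_n}{n}\) converges in \((X, d)\); symmetry of the hypothesis (Corollary~\ref{c2.7}) then finishes the proof. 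The reflection-based route is cleaner, however, and reuses machinery already in place.
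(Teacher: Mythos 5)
Your proposal is correct and follows essentially the same route as the paper: invoke Theorem~\ref{t3.3} to get that the metric reflections are isometric, transfer completeness between isometric metric spaces, and conclude via Theorem~\ref{t1.14}. Your explicit treatment of the empty case (needed because Theorem~\ref{t1.14} assumes nonemptiness) is a small point of extra care that the paper's proof omits, but it does not change the argument.
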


\begin{proof}
Since \((X, d)\) and \((Y, \rho)\) are pseudoisometric, the metric spaces \((X/\coherent{0}, \delta_d)\) and \((Y/\coherent{0}, \delta_\rho)\) are isometric by Theorem~\ref{t3.3}. Hence, \((X/\coherent{0}, \delta_d)\) is complete iff \((Y/\coherent{0}, \delta_\rho)\) is complete. Now using Theorem~\ref{t1.14} we see that \((X, d)\) is complete iff \((Y, \rho)\) is complete.
\end{proof}

The following proposition characterizes the relation \(R_{\Pi}\) by a ``minimal'' property.

\begin{proposition}\label{p3.7}
Let \(R\) be an equivalence relation of the class of all pseudometric spaces. If we have the inclusion
\begin{equation}\label{p3.7:e1}
R \subseteq R_{\Pi},
\end{equation}
and any two isometric metric spaces are \(R\)-equivalent, and \(\<(Y, \rho), (Y/\coherent{0}, \delta_\rho)> \in R\) is valid for every pseudometric space \((Y, \rho)\), then the equality \(R = R_{\Pi}\) holds.
\end{proposition}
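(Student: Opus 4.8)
The plan is to establish the inclusion $R_{\Pi} \subseteq R$, since the reverse inclusion is exactly hypothesis~\eqref{p3.7:e1}. Accordingly, I would fix an arbitrary pair of pseudoisometric pseudometric spaces $(X, d)$ and $(Y, \rho)$ and aim to show that $\<(X, d), (Y, \rho)> \in R$.

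First I would appeal to Theorem~\ref{t3.3}: because $(X, d)$ and $(Y, \rho)$ are pseudoisometric, their metric reflections $(X/\coherent{0}, \delta_d)$ and $(Y/\coherent{0}, \delta_{\rho})$ are isometric metric spaces. By the hypothesis that any two isometric metric spaces are $R$-equivalent, this gives
\[
\<(X/\coherent{0}, \delta_d), (Y/\coherent{0}, \delta_{\rho})> \in R.
\]

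Next I would invoke the third hypothesis twice, obtaining $\<(X, d), (X/\coherent{0}, \delta_d)> \in R$ and $\<(Y, \rho), (Y/\coherent{0}, \delta_{\rho})> \in R$. Since $R$ is an equivalence relation, symmetry and transitivity let me chain
\[
(X, d) \mathrel{R} (X/\coherent{0}, \delta_d) \mathrel{R} (Y/\coherent{0}, \delta_{\rho}) \mathrel{R} (Y, \rho),
\]
and therefore $\<(X, d), (Y, \rho)> \in R$. This proves $R_{\Pi} \subseteq R$; combined with \eqref{p3.7:e1} it yields $R = R_{\Pi}$.

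There is no genuinely hard step here. The only point requiring care is that $R$ is assumed to link metric spaces only when they are isometric, and pseudometric spaces only via the explicit pair $\<(Y,\rho),(Y/\coherent{0},\delta_\rho)>$, rather than linking pseudoisometric pseudometric spaces directly; hence the argument must be routed through the metric reflections by means of Theorem~\ref{t3.3}. Once this observation is made, the proof is a routine equivalence-relation chase.
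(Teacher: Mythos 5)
Your proposal is correct and follows essentially the same route as the paper: pass to the metric reflections via Theorem~\ref{t3.3} (the paper cites Corollary~\ref{c2.13}, which is the same fact), apply the hypothesis on isometric metric spaces, and close the chain with the two reflection memberships and transitivity of \(R\).
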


\begin{proof}
Suppose that inclusion \eqref{p3.7:e1} holds, and any two isometric metric spaces are \(R\)-equivalent, and each pseudometric space is \(R\)-equivalent to its metric reflection. We must show that
\begin{equation}\label{p3.7:e3}
\<(X, d), (Y, \rho)> \in R
\end{equation}
holds whenever
\begin{equation}\label{p3.7:e2}
\<(X, d), (Y, \rho)> \in R_{\Pi}.
\end{equation}
To prove the validity of \eqref{p3.7:e3}, we note that the memberships
\begin{equation}\label{p3.7:e4}
\<(X, d), (X/\coherent{0}, \delta_d)> \in R
\end{equation}
and
\begin{equation}\label{p3.7:e5}
\<(Y, \rho), (Y/\coherent{0}, \delta_\rho)> \in R
\end{equation}
are valid by supposition above. Moreover, by Corollary~\ref{c2.13}, membership~\eqref{p3.7:e2} implies that \((X/\coherent{0}, \delta_d)\) and \((Y/\coherent{0}, \delta_\rho)\) are isometric metric spaces. Thus, we have
\begin{equation}\label{p3.7:e6}
\<(X/\coherent{0}, \delta_d), (Y/\coherent{0}, \delta_\rho)> \in R
\end{equation}
by our supposition. Now \eqref{p3.7:e3} follows from \eqref{p3.7:e4}--\eqref{p3.7:e6} by transitivity of \(R\).
\end{proof}

\section*{Funding}

Oleksiy Dovgoshey was partially supported by Volkswagen Stiftung Project ``From
Modeling and Analysis to Approximation''.

\end{document}